\newtheorem{theorem}{Theorem}[section]
\newtheorem{prop}[theorem]{Proposition}
\newtheorem{lemma}[theorem]{Lemma}
\newtheorem{corollary}[theorem]{Corollary}
\numberwithin{equation}{section} 
\begin{document}
\title{Redundancy of minimal weight expansions in Pisot bases}

\author[P. J. Grabner]{Peter J. Grabner\textsuperscript{\dag{}}}
\thanks{\textsuperscript{\dag{}}This author is supported by the Austrian
  Science Foundation FWF, project S9605, part of the Austrian National Research
  Network ``Analytic Combinatorics and Probabilistic Number Theory''.}
\address[P. Grabner]{Institut f\"ur Analysis und Computational Number Theory,
  Technische Universit\"at Graz, Steyrergasse 30, 8010 Graz, Austria}
\email{peter.grabner@tugraz.at}

\author[W. Steiner]{Wolfgang Steiner}
\address[W. Steiner]{LIAFA, CNRS, Universit\'e Paris Diderot -- Paris 7, Case
  7014, 75205 Paris Cedex 13, France}
\email{steiner@liafa.jussieu.fr}
\date{}

\begin{abstract}
  Motivated by multiplication algorithms based on redundant number
  representations, we study representations of an integer $n$ as a sum
  $n=\sum_k \varepsilon_k U_k$, where the digits $\varepsilon_k$ are taken from a
  finite alphabet $\Sigma$ and $(U_k)_k$ is a linear recurrent sequence
  of Pisot type with $U_0=1$. The most prominent example of a base sequence 
  $(U_k)_k$ is the sequence of Fibonacci numbers. We prove that the representations of minimal
  weight $\sum_k|\varepsilon_k|$ are recognised by a finite automaton and obtain
  an asymptotic formula for the average number of representations of minimal
  weight. Furthermore, we relate the maximal order of magnitude of the number
  of representations of a given integer to the joint spectral radius of a certain set of matrices.
\end{abstract}

\keywords{redundant systems of numeration, linear recurrent base sequence, Fibonacci numbers, Pisot numbers, representation of minimal weight, fast multiplication}
\subjclass[2000]{Primary: 11A63, Secondary: 68Q45, 11K16, 11K55}

\maketitle 
\section{Introduction}\label{sec:introduction}
Forming large multiples of elements of a given group plays an important role in
public key cryptosystems based on the Diffie-Hellman scheme (cf.\ for instance
\cite{Cohen_Frey_Avanzi+2006:handbook_of_elliptic}, especially
\cite{Doche2006:exponentiation}). In practice, the underlying groups are often
chosen to be the multiplicative group of a finite field $\mathbb{F}_q$ or the
group law of an elliptic curve (elliptic curve cryptosystems).

For $P$ an element of a given group (written additively), we need to form $nP$
for large $n\in\mathbb{N}$ in a short amount of time. One way to do this is the
\emph{binary method} (cf.\ \cite{Gathen_Gerhard1999:modern_computer_algebra}),
which is simply an applications of Horner's scheme to the binary expansion of
$n$. This method uses the operations of ``doubling'' and ``adding~$P$''. If we
write $n$ in its binary representation, the number of doublings is fixed by
$\lfloor\log_2n\rfloor$ and each \emph{one} in this representation corresponds
to an addition.  Thus the cost of the multiplication depends on the length of
the binary representation of $n$ and the number of ones in this representation.

In the case of the point group of an elliptic curve, addition and subtraction
are given by very similar expressions and are therefore equally costly. Thus it
makes sense to work with \emph{signed binary representations}, i.e., binary
representations with digits $\{0,\pm 1\}$. The advantage of these
representations is their redundancy: in general, $n$ has many different signed
binary representations. Then the number of non-zero digits in a signed binary
representation of $n$ is called the \emph{Hamming weight} of this
representation. Since each non-zero digit causes a group addition ($1$~causes
addition of~$P$, $-1$~causes subtraction of~$P$), one is interested in finding
a representation of $n$ having minimal Hamming weight. Such a minimal
representation was exhibited by Reitwiesner
\cite{Reitwiesner1960:binary_arithmetic}. The number of binary representations
of minimal weight has been analysed in
\cite{Grabner_Heuberger2006:number_optimal_base}.
    
In the present paper we propose to use Fibonacci-multiples instead of powers
of~$2$. The advantage of this choice is to avoid successive duplication (most
of the time), which uses a different formula in the case of the group law of an
elliptic curve. A~further advantage of these representations is the smaller
average weight compared to the binary representation (cf.\
\cite{Frougny_Steiner2008:minimal_weight_expansions}). More generally, we study
representations with linear recurrent base sequences of Pisot type. 
We calculate the number of representations of minimal weight with respect to these
numeration systems and obtain an asymptotic formula for the average number of
representations in the range $[-N,N]$.

A main tool of our study will be automata, which recognise the various
representations. As a general reference for automata in the context of number
representation we refer to
\cite{Frougny2002:numeration_systems,
Frougny_Sakarovitch2010:number_representations_automata}.
The books \cite{Lind_Marcus1995:introduction_symbolic_dynamics,
 Sakarovitch2009:elements_of_automata} provide the basic notions
of symbolic dynamics and automata theory.

\section{$U$-expansions and $\beta$-expansions of minimal weight}
\label{sec:beta-expans-minim}

\subsection{Setting}
Let $U = (U_k)_{k\ge0}$ be a strictly increasing sequence of integers with $U_0 = 1$, and $z = z_k z_{k-1} \cdots z_0$ a finite word on an alphabet $\Sigma \subseteq \mathbb{Z}$. 
We say that $z$ is a \emph{$U$-expansion} of the number $\sum_{j=0}^k z_j U_j$.
The \emph{greedy $U$-expansions} of positive integers~$n$, which are defined by
\begin{equation*}
n = \sum_{j=0}^k z_j U_j \quad \mbox{with} \quad \sum_{j=0}^\ell z_j U_j < U_{\ell+1} \quad \mbox{for}\ \ell = 0,1,\ldots,k,\ z_k \ne 0,
\end{equation*}
are well studied, in particular for the case when $U$ is the Fibonacci sequence $F = (F_k)_{k\ge0}$ with $F_0 = 1$, $F_1 = 1$, $F_k = F_{k-1} + F_{k-2}$ for $k \ge 2$, see e.g.\ \cite{Frougny2002:numeration_systems}.
The sum-of-digits function of greedy $U$-expansions with $U$ satisfying suitable linear recurrences has been studied by \cite{Petho_Tichy1989} and in several subsequent papers.

In the present paper we are interested in words with the smallest weight among all $U$-expansions of the same number. 
Here the \emph{weight} of $z$ is the absolute sum of digits $\|z\| = \sum_{j=0}^k |z_j|$.  
This weight is equal to the Hamming weight when $z \subseteq \{-1,0,1\}^*$, where $\Sigma^*$ denotes the set of finite words with letters in the alphabet~$\Sigma$.

We define the relation $\sim_U$ on words in $\mathbb{Z}^*$ by $z \sim_U y$ when $z$ and $y$ are $U$-expansions of the same number, i.e., 
\begin{equation*}
z_k z_{k-1} \cdots z_0 \sim_U y_\ell y_{\ell-1} \cdots y_0 \quad \mbox{if and only if} \quad
\sum_{j=0}^k z_j U_j = \sum_{j=0}^\ell y_j U_j.
\end{equation*}
Then the set of \emph{$U$-expansions of minimal weight} is 
\begin{equation*}
L_U = \{z \in \mathbb{Z}^*:\ \|z\| \le \|y\|\ \mbox{for all}\ y \in \mathbb{Z}^*\ \mbox{with}\ z \sim_U y\}.
\end{equation*}
Of course, leading zeros do not change the value and weight of a $U$-expansion.
In particular, every element of $0^* z$ is in $L_U$ if $z \in L_U$. 

Throughout the paper, we assume that there exists a \emph{Pisot number}~$\beta$, i.e., an algebraic integer $\beta > 1$ with $|\beta_i| < 1$ for every Galois conjugate $\beta_i \ne \beta$, such that $U$ satisfies (eventually) a linear recurrence with characteristic polynomial equal to the minimal polynomial of~$\beta$.
Then there exists some constant $c > 0$ such that 
\begin{equation} \label{eq:Gk}
U_k = c\, \beta^k + \mathcal{O}(|\beta_2|^k),
\end{equation}
where $\beta_2$ is the second largest conjugate of $\beta$ in modulus.

\subsection{Regularity of $L_U$}
For three particular sequences~$U$ (the Fibonacci sequence, the Tribonacci sequence and a sequence related to the smallest Pisot number), the set $L_U \cap \{-1,0,1\}^*$ is given explicitely in \cite{Frougny_Steiner2008:minimal_weight_expansions} by means of a finite automaton, see Figure~\ref{figexp} for the Fibonacci sequence.
Recall that an \emph{automaton} $\mathcal{A} = (Q,\Sigma,E,I,T)$ is a directed graph, where $Q$ is the set of vertices, traditionally called states, $I \subseteq Q$ is the set of initial states, $T \subseteq Q$ is the set of terminal states and $E \subseteq Q \times \Sigma \times Q$ is the set of edges (or transitions) which are labelled by elements of~$\Sigma$.
If $(p,a,q) \in E$, then we write $p \stackrel{a}{\to} q$.
A word in $\Sigma^*$ is \emph{accepted by~$\mathcal{A}$} if it is the label of a path starting in an initial state and ending in a terminal state.
The set of words which are accepted by~$\mathcal{A}$ is said to be \emph{recognised by~$\mathcal{A}$}.
A~\emph{regular language} is a set of words which is recognised by a finite automaton.
The main result of this subsection is the following theorem.

\begin{figure}[ht]
\includegraphics[scale=1.2]{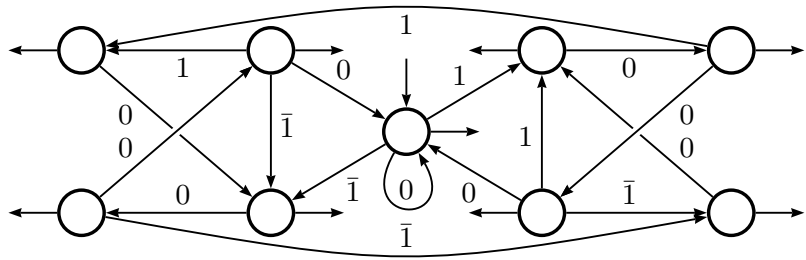}
\caption{Automaton recognising the set of $F$-expansions of minimal weight in $\{-1,0,1\}^*$.}
\label{figexp}
\end{figure}

\begin{theorem}\label{thm:ratU}
Let $U = (U_k)_{k\ge0}$ be a strictly increasing sequence of integers with $U_0 = 1$, satisfying eventually a linear recurrence with characteristic polynomial equal to the minimal polynomial of a Pisot number.
Then the set of $U$-expansions of minimal weight is recognised by a finite automaton.
\end{theorem}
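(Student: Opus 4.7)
\emph{Step 1: bounded alphabet.} I would first show that $L_U\subseteq\Sigma_0^*$ for a fixed finite alphabet $\Sigma_0=\{-B,\dots,B\}$. If a digit $|z_j|$ in some $z\in L_U$ exceeded a large constant $B$, one could rewrite the single summand $z_jU_j$ by its signed greedy $U$-expansion, which uses only $O(\log|z_j|)$ digits of uniformly bounded size; combining with the neighbouring digits of $z$ would yield an equivalent word of strictly smaller weight, contradicting minimality. Hence every $z\in L_U$ has digits in some fixed $\Sigma_0$.

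\emph{Step 2: finite transducer for $\sim_U$.} Using the Pisot assumption and~\eqref{eq:Gk}, I would next construct a finite transducer $\mathcal{T}$ over $\Sigma_0\times\Sigma_0$ whose accepted pairs $(z,y)$---with $z,y$ padded by leading zeros to equal length---are exactly those with $z\sim_U y$. The state encodes the partial difference $\sum_{i\le j}(z_i-y_i)U_i$ through its image in $\mathbb{Z}[\beta]$: its subdominant Galois components are contracted under the recurrence because every conjugate $\beta_i\ne\beta$ satisfies $|\beta_i|<1$, so they stay in a compact region; meanwhile, boundedness of the digits forces the dominant component to remain small on any run completable to acceptance, so the whole state ranges over a finite subset of $\mathbb{Z}[\beta]$. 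This is the classical Pisot-numeration construction, with a bounded transient to handle the indices where the recurrence is not yet valid. Labelling each transition $(a,b)$ with the integer weight $|b|-|a|$ makes the total weight of an accepting run on $(z,y)$ equal $\|y\|-\|z\|$.

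\emph{Step 3: detecting weight reductions.} Finally, $z\notin L_U$ exactly when $\mathcal{T}$ admits an accepting run on $z$ with strictly negative total weight. I would capture this by a determinisation-style construction tracking, for each state $q$ of $\mathcal{T}$, the minimum running weight of a partial path ending at $q$, expressed relative to the overall minimum across all currently reachable states. The main finiteness claim---and the principal obstacle---is that these relative minima take only finitely many values, independently of $z$. This reduces to a Bellman--Ford-style analysis of the weighted $\mathcal{T}$: any accepting run of sufficiently negative total weight must traverse one of the finitely many simple negative-weight cycles of $\mathcal{T}$, so the deviations of the running weight needed to witness negativity are uniformly bounded. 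Complementing the resulting finite automaton inside $\Sigma_0^*$, and using that $L_U$ is closed under prepending zeros, produces the finite automaton claimed by the theorem.
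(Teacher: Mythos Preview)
Your Steps~1 and~2 are essentially correct and mirror the paper's Propositions~2.2--2.4: the alphabet is bounded because a large digit $z_jU_j$ can be rewritten with weight $O(\log|z_j|)$, and the relation $\sim_U$ on $\Sigma_0^*\times\Sigma_0^*$ is recognised by a finite transducer whose states are carries in a bounded subset of $\mathbb{Z}[\beta]$ (plus a bounded transient buffer).

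Step~3 is where the real difficulty lies, and your justification does not close the gap. Two problems:

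\emph{(i) The Bellman--Ford argument does not apply.} You invoke ``simple negative-weight cycles of $\mathcal{T}$'', but cycles in $\mathcal{T}$ carry an \emph{input} label as well as an output label; for a fixed input $z$ only those cycles whose input label matches the corresponding factor of $z$ are traversable. Bellman--Ford bounds shortest paths in an unconstrained graph; here the admissible paths are constrained by $z$, so the existence of negative cycles in $\mathcal{T}$ says nothing about which runs are available on a given input. The boundedness of the gaps $m_q-\min_{q'}m_{q'}$ is exactly the ``twins property'' needed to determinise a $(\min,+)$-automaton, and in general it fails; proving it here requires genuine structural information about minimal-weight representations, not a generic graph argument.

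\emph{(ii) Relative minima alone do not decide negativity.} Even if the vector $(m_q-\min_{q'}m_{q'})_q$ is bounded, you still need the \emph{sign} of $m_q$ at a terminal state when the input is exhausted. From the normalised vector you cannot recover this without also tracking (a capped version of) the absolute offset, which you do not mention.

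The paper closes exactly this gap by a different mechanism: it imports the regularity of $L_\beta$ from \cite{Frougny_Steiner2008:minimal_weight_expansions} and restricts the witnessing output $y$ to lie in $L_U$, so that both the input prefix $z_k\cdots z_{h+K}$ and the output prefix $y_\ell\cdots y_{h+K}$ lie in $L_\beta$. A lemma from that reference then bounds $\delta=\|y_k\cdots y_m\|-\|z_k\cdots z_m\|$ directly, making the transducer $\mathcal{T}'$ with states $(s,z',y',\delta)$ finite. Words whose prefix already falls outside $L_\beta$ are handled separately via the known $L_\beta$-automaton. So the finiteness you need in Step~3 is ultimately obtained from the structure of $L_\beta$, not from a generic weighted-automaton argument.
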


First note that the structure of $L_U$ is similar to the structure of the $\beta$-expansions of minimal weight.
Here, $z = z_k z_{k-1} \cdots z_0 \in \mathbb{Z}^*$ is a \emph{$\beta$-expansion} of the number $\sum_{j=0}^k z_j \beta^j$.  
Similarly to~$\sim_U$, we define the relation $\sim_\beta$ on $\mathbb{Z}^*$ by
\begin{equation} \label{eq:simbeta}
z_k z_{k-1} \cdots z_0 \sim_\beta y_\ell y_{\ell-1} \cdots y_m \quad \mbox{if} \quad \sum_{j=0}^k z_j \beta^j = \sum_{j=m}^\ell y_j \beta^j.
\end{equation}
A~difference with $\sim_U$ is that $m$ can be chosen freely in~$\mathbb{Z}$; we have $z \sim_\beta y$ if~\eqref{eq:simbeta} holds for some $m \in \mathbb{Z}$.
The set of \emph{$\beta$-expansions of minimal weight} is 
\begin{equation*}
L_\beta = \{z \in \mathbb{Z}^*:\ \|z\| \le \|y\|\ \mbox{for all}\ y \in \mathbb{Z}^*\ \mbox{with}\ z \sim_\beta y\}.
\end{equation*}
(These definitions are equivalent to the ones in \cite{Frougny_Steiner2008:minimal_weight_expansions}.)
Now, leading and trailing zeros do not change the minimal weight property, i.e., $0^* L_\beta\, 0^* = L_\beta$. 
Theorem~3.11 in~\cite{Frougny_Steiner2008:minimal_weight_expansions} states that one can construct a finite automaton recognising~$L_\beta$.
The proof of the corresponding result for $L_U$ is slightly more complicated.
We start with the following proposition, which resembles Proposition~3.5 in \cite{Frougny_Steiner2008:minimal_weight_expansions}.

\begin{prop} \label{p:DG2}
Let $U$ be as in Theorem~\ref{thm:ratU}. 
Then there exists a positive integer $B$ such that
\begin{equation}\label{eq:DG2}
\forall\, k \ge 0,\ \exists\, b^{(k)} \in \mathbb{Z}^*:\ B\, 0^k \sim_U b^{(k)},\ \|b^{(k)}\| < B.
\end{equation}
\end{prop}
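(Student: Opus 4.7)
The plan is to deduce this from the analogous Proposition~3.5 of \cite{Frougny_Steiner2008:minimal_weight_expansions} for $\beta$-expansions, using the Galois action to transfer identities between powers of $\beta$ to identities between the $U_j$. Since $U$ satisfies the linear recurrence whose characteristic polynomial is the minimal polynomial of $\beta$, there exist $k_0 \ge 0$ and constants $\alpha_1, \ldots, \alpha_d$ (with $\alpha_1 = c$ the constant in \eqref{eq:Gk}) such that $U_j = \sum_{i=1}^d \alpha_i \beta_i^j$ for every $j \ge k_0$, where $\beta_1 = \beta, \beta_2, \ldots, \beta_d$ are the Galois conjugates of $\beta$. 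Consequently, any integer identity $B \beta^k = \sum_j c_j \beta^j$ remains valid after applying each Galois automorphism $\beta \mapsto \beta_i$, and taking the $\alpha_i$-weighted combination yields $B U_k = \sum_j c_j U_j$, provided every index on the right-hand side is at least $k_0$.

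By Proposition~3.5 of \cite{Frougny_Steiner2008:minimal_weight_expansions} (and since replacing $B$ by any integer multiple $NB$ preserves the hypothesis, via the expansion $N b^{(0)}$ of weight $N \|b^{(0)}\| < NB$), one may fix $B$ as large as desired together with a $\beta$-expansion $b^{(0)} = b^{(0)}_L b^{(0)}_{L-1} \cdots b^{(0)}_m$ of $B$ (possibly with $m < 0$) of weight $\|b^{(0)}\| < B$. Multiplying through by $\beta^k$ gives $B \beta^k = \sum_{j=m}^L b^{(0)}_j \beta^{j+k}$; as soon as $k \ge k_0 - m$, every exponent $j + k$ is at least $k_0$, and the Galois transfer above produces the genuine $U$-expansion $B U_k = \sum_{j=m}^L b^{(0)}_j U_{j+k}$, of weight $\|b^{(0)}\| < B$, as required.

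The finitely many remaining values $k \in \{0, 1, \ldots, k_0 - m - 1\}$ are handled individually: for each such $k$, the integer $B U_k$ is bounded by a fixed constant times $B$, and its greedy $U$-expansion has length $\mathcal{O}(\log B)$ with digits bounded by a constant depending only on~$U$, hence weight $\mathcal{O}(\log B)$. Choosing $B$ large enough that $\mathcal{O}(\log B) < B$ holds for all of these finitely many instances simultaneously completes the argument. The main obstacle, distinguishing this from the pure $\beta$-case, is precisely that the expansion furnished by Proposition~3.5 may carry negative powers of $\beta$, which would spoil the transfer to a $U$-expansion (whose indices must be non-negative); this is circumvented by the shift by $k$, at the price of a direct greedy computation for the short initial range of $k$ which the shift does not cover.
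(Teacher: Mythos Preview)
Your proof is correct and follows essentially the same route as the paper: invoke Proposition~3.5 of \cite{Frougny_Steiner2008:minimal_weight_expansions} to obtain a $\beta$-expansion of a large $B$ with weight strictly less than~$B$, shift it by $k$ to produce a $U$-expansion of $B\,U_k$ once all indices lie in the range where the recurrence holds, and treat the finitely many small $k$ by bounding the weight of the greedy $U$-expansion of $B\,U_k$ by $\mathcal{O}(\log B)$. The only difference is presentational: the paper asserts the transfer $B\,0^k \sim_U b_\ell\cdots b_m\,0^{k+m}$ directly from the recurrence, while you make it explicit via the Binet-type formula $U_j=\sum_i\alpha_i\beta_i^{\,j}$ and the Galois action.
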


\begin{proof}
Let $U$ be a strictly increasing sequence of integers with $U_0 = 1$, $\beta$~a Pisot number of degree~$d$, and $h \ge 0$ an integer such that, for all $k \ge h+d$, $U_k$~is given by the linear recurrence with respect to the minimal polynomial of~$\beta$. 
By \cite[Proposition~3.5]{Frougny_Steiner2008:minimal_weight_expansions}, we know that for sufficiently large~$B$ there exists some $b = b_\ell \cdots b_m \in \mathbb{Z}^*$ such that $B = \sum_{j=m}^\ell b_j \beta^j$ and $\|b\| < B$.
Then we have $B\, 0^k \sim_U b_\ell \cdots b_m\, 0^{k+m}$ for all $k \ge h-m$.
For $0 \le k < h-m$, the weight of the greedy $U$-expansion of the integer $B\, U_k$ grows with $\mathcal{O}(\log B)$.
Therefore, there exists some positive integer $B$ satisfying~\eqref{eq:DG2}.
\end{proof}

\begin{prop} \label{p:LUB}
Let $U$ be as in Theorem~\ref{thm:ratU}.
If $B$ is a positive integer satisfying~\eqref{eq:DG2}, then $L_U \subseteq \{1-B,\ldots,B-1\}^*$.
If $B$ is a positive integer satisfying 
\begin{equation}\label{eq:DG}
\forall\, k \ge 0,\ \exists\, b^{(k)} \in \mathbb{Z}^*:\ B\, 0^k \sim_U b^{(k)},\ \|b^{(k)}\| \le B,
\end{equation}
then there exists for every $n \in \mathbb{Z}$ some $z \in L_U \cap \{1-B,\ldots,B-1\}^*$ with $z \sim_U n$.
\end{prop}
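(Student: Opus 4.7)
For the first claim, I would argue by contradiction. Suppose $z = z_k\cdots z_0 \in L_U$ has a digit $z_j$ with $|z_j|\ge B$; by the closure of $L_U$ under the involution $z\mapsto -z$, we may assume $z_j\ge B$. Invoking~\eqref{eq:DG2}, fix $b^{(j)}\in\mathbb{Z}^*$ with $B\,0^j\sim_U b^{(j)}$ and $\|b^{(j)}\|<B$. Define $y$ as the digit-wise sum of $z$, the word with $-B$ in position $j$ and zero elsewhere, and $b^{(j)}$ (padding with leading zeros as needed). Then $y\sim_U z$, and a position-by-position application of the triangle inequality yields
\begin{equation*}
\|y\|\;\le\;\bigl(\|z\|-B\bigr)+\|b^{(j)}\|\;<\;\|z\|,
\end{equation*}
where the first inequality uses $z_j\ge B$ at position $j$ (so that $|z_j-B|=|z_j|-B$) and the second uses $\|b^{(j)}\|<B$. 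This contradicts $z\in L_U$.

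For the second claim, a minimum-weight $U$-expansion of $n$ exists because the set of weights of $U$-expansions of~$n$ is a nonempty subset of~$\mathbb{N}_0$; so some $z\in L_U$ with $z\sim_U n$ is available as a starting point. To force the digits into $\{1-B,\ldots,B-1\}$, my plan is to iterate the same replacement as in the first part, now using only the weaker hypothesis~\eqref{eq:DG}: whenever a digit $|z_j|\ge B$ occurs, produce $y\sim_U z$ as above. The same computation now gives $\|y\|\le\|z\|$, and since $z$ is already of minimum weight we get $\|y\|=\|z\|$, so $y\in L_U$.

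The main obstacle is terminating this procedure: because~\eqref{eq:DG} replaces the strict inequality $\|b^{(j)}\|<B$ by $\|b^{(j)}\|\le B$, the weight need not strictly decrease, and a single replacement can create new digits of magnitude $\ge B$ at other positions lying in the support of~$b^{(j)}$. My plan to bypass this difficulty has two ingredients. First, Proposition~\ref{p:DG2} provides some $B^*\ge B$ satisfying~\eqref{eq:DG2}, so by the first claim every element of $L_U$ lies in the finite alphabet $\{1-B^*,\ldots,B^*-1\}$; in particular all iterates stay there. Second, among all $z\in L_U$ with $z\sim_U n$, I would select a minimizer with respect to a well-order chosen so that a single non-trivial replacement at a digit of magnitude $\ge B$ produces a strictly smaller element---for example, taking the length of~$z$ as primary key and, as secondary key, the top-down lexicographic order on the digit-magnitude sequence $(|z_{K-1}|,|z_{K-2}|,\ldots,|z_0|)$. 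The technical core is verifying that this minimizer cannot contain a digit of magnitude $\ge B$: one applies the replacement at the highest such position and exploits the bounded support of $b^{(j)}$ coming from the Pisot recurrence structure of the $b^{(k)}$ exhibited in the proof of Proposition~\ref{p:DG2}.
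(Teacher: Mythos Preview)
Your argument for the first claim is correct and is exactly the intended one-step replacement: subtract $B$ at a position with $|z_j|\ge B$, add back $b^{(j)}$, and use $|z_j-B|=|z_j|-B$ together with the triangle inequality at the remaining positions to drop the weight strictly. The paper does not spell this out but simply points to Proposition~3.1 of \cite{Frougny_Steiner2008:minimal_weight_expansions}, where the same replacement idea is used.

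For the second claim, your diagnosis of the difficulty (no strict weight drop under~\eqref{eq:DG}) and your finiteness reduction via Proposition~\ref{p:DG2} and Lemma~\ref{l:K} are both sound. The gap is in the termination device: the well-order ``length, then top-down lexicographic on magnitudes'' is \emph{not} decreased by the replacement you describe. In the equality case $\|y\|=\|z\|$ every triangle inequality is tight, so at each position $i\neq j$ in the support of $b^{(j)}$ the magnitude \emph{increases}, $|y_i|=|z_i|+|b^{(j)}_i|$. Since the words $b^{(j)}$ typically have support strictly above~$j$ (already for $U=F$, $B=2$: $b^{(0)}=10$, and $b^{(k)}=1001\,0^{k-2}$ for $k\ge2$ has a nonzero digit at position $k{+}1$), replacing at the \emph{highest} bad position raises a magnitude at some position above~$j$ and may even increase the length. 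Thus $y>z$ in your order, and the minimizer argument yields no contradiction. Invoking ``bounded support of $b^{(j)}$'' does not help: bounded support above~$j$ is precisely what drives the increase. A symmetric attempt from below fails for the same reason (e.g.\ $b^{(1)}=101$ touches position~$0$).

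So the plan needs a genuinely different monovariant---one that provably drops under \emph{some} admissible replacement in the equality case---or else the route the paper takes: carry over the termination already established for $\beta$-expansions in \cite[Proposition~3.1]{Frougny_Steiner2008:minimal_weight_expansions} (where two-sided shifts are available) and use it as the model for the $U$-argument.
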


\begin{proof}
This can be proved similarly to Proposition~3.1 in \cite{Frougny_Steiner2008:minimal_weight_expansions}.
\end{proof}

For $U = F$, \eqref{eq:DG} holds with $B = 2$ since $2 \sim_F 10$, $20 \sim_F 4 \sim_F 101$ and $2\,0^k \sim_F 1001\,0^{k-2}$ for $k \ge 2$. 
Therefore, we are mainly interested in the language $L_F \cap \{-1,0,1\}^*$, which is recognised by the automaton in Figure~\ref{figexp} \cite[Theorem~4.7]{Frougny_Steiner2008:minimal_weight_expansions}.
The minimal positive integer satisfying~\eqref{eq:DG2} is $B = 3$.
Here, we have $3 \sim_F 100$, $30 \sim_F 6 \sim_F 1001$ and $3\,0^k \sim_F 10001\,0^{k-2}$ for $k \ge 2$, whereas $20$ is clearly a $F$-expansion of minimal weight.

Next we show the following generalisation of a well known result for $\beta$-expansions \cite[Corollary~3.4]{Frougny1992:representation_numbers_finite}.
For a subclass of sequences~$U$, this result can be found in \cite{Frougny1989,Frougny1992:representation_numbers_finite}.

\begin{prop} \label{p:ratZ}
Let $U$ be as in Theorem~\ref{thm:ratU}.
Then, for every finite alphabet $\Sigma \subset \mathbb{Z}$,
\begin{equation*}
Z_{U,\Sigma} = \{z \in \Sigma^*:\ z \sim_U 0\}
\end{equation*} 
is recognised by a finite automaton.
\end{prop}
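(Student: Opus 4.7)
The plan is to adapt the Pisot-style automaton construction used in \cite[Corollary~3.4]{Frougny1992:representation_numbers_finite} for the $\beta$-expansion analogue. Fix $h \ge 0$ so that $U_j = \sum_{m=1}^d c_m \beta_m^j$ for all $j \ge h$, where $d = \deg \beta$, $\beta = \beta_1, \beta_2, \ldots, \beta_d$ are the Galois conjugates of $\beta$, and $c_m = \sigma_m(c)$ with $c_1 = c$ as in~\eqref{eq:Gk}; a standard Galois argument, using that each $U_j$ is a rational integer, shows $c \in \mathbb{Q}(\beta)$. Writing $Z(X) = \sum_{j=0}^k z_j X^j$ for a word $z = z_k \cdots z_0 \in \Sigma^*$, a short computation gives
\[
\sum_{j=0}^k z_j U_j \;=\; \operatorname{Tr}_{\mathbb{Q}(\beta)/\mathbb{Q}}\bigl(c\,Z(\beta)\bigr) + E(z_0,\ldots,z_{h-1}),
\]
where the correction term $E(z_0,\ldots,z_{h-1}) = \sum_{j=0}^{h-1} z_j \bigl(U_j - \sum_{m=1}^d c_m \beta_m^j\bigr)$ is a linear function of the $h$ least significant digits only. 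Hence $z \sim_U 0$ if and only if $\operatorname{Tr}\bigl(c\,Z(\beta)\bigr) = -E(z_0,\ldots,z_{h-1})$.

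I would then build an automaton reading $z$ from most-significant to least-significant digit. After reading the prefix $z_k \cdots z_{k-\ell+1}$, the state records (i)~the element $R_\ell \in \mathbb{Z}[\beta]$ associated to the polynomial $R_\ell(X) = \sum_{j=k-\ell+1}^{k} z_j X^{j-(k-\ell+1)}$, represented by its Minkowski image $(R_\ell(\beta_1),\ldots,R_\ell(\beta_d)) \in \mathbb{R}^d$, together with (ii)~a sliding window holding the last (up to) $h$ digits read. Transitions implement $R_{\ell+1}(\beta_m) = \beta_m R_\ell(\beta_m) + z_{k-\ell}$ in each coordinate, and the obvious shift on the window. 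The initial state is $(0,\varepsilon)$, and the accepting states are the pairs $(R,w)$ with $\operatorname{Tr}\bigl(c R(\beta)\bigr) = -E(w)$, where $w$ is padded with zeros to length $h$ if necessary (this also handles short words of length $\le h$).

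Finiteness of the trim automaton then follows from the classical Pisot argument. For $m \ge 2$ the contraction $|\beta_m| < 1$ together with $R_0 = 0$ gives the uniform bound $|R_\ell(\beta_m)| \le \max_{a \in \Sigma}|a|/(1-|\beta_m|)$. For the expanding coordinate $R_\ell(\beta)$: any state that can still reach an accepting state in $s$ further transitions satisfies $R_{\ell+s}(\beta) = \beta^s R_\ell(\beta) + \sum_{j=0}^{s-1} z_{k-\ell-j}\beta^j$; acceptance forces $|R_{\ell+s}(\beta)|$ to be bounded, because its non-principal conjugate coordinates are bounded and its trace is determined by $E$; solving for $R_\ell(\beta)$ then yields a bound independent of $\ell$. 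Since $\mathbb{Z}[\beta]$ embeds as a discrete lattice in $\mathbb{R}^d$ under the Minkowski embedding, only finitely many $R_\ell$ survive, and together with the $O(|\Sigma|^h)$ possible windows this produces a finite automaton recognising $Z_{U,\Sigma}$. The main obstacle I expect is the bookkeeping at positions $j < h$ where the recurrence may fail; this is absorbed into the sliding window and the compensating term $E$ in the acceptance condition, after which the Pisot contraction and lattice discreteness proceed exactly as in the $\beta$-expansion case.
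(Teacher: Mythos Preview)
Your argument is correct and follows essentially the same route as the paper: both build an automaton whose state is a pair consisting of an element of $\mathbb{Z}[\beta]$ (updated by a Horner rule $s \mapsto \beta s \pm a$) together with a sliding window of the last $h$ digits, and both prove finiteness by bounding the non-principal conjugates via the Pisot contraction, bounding the principal coordinate via the acceptance condition, and invoking discreteness of $\mathbb{Z}[\beta]$ under the Minkowski embedding. The only cosmetic difference is that you phrase the acceptance condition through the trace identity $\operatorname{Tr}(c\,Z(\beta)) = -E(w)$, whereas the paper records it via the finite sets $T_U(z')$; these are equivalent formulations of the same constraint.
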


\begin{proof}
Let $U$ be as in the proof of Proposition~\ref{p:DG2}.
Let $\beta = \beta_1, \beta_2, \ldots, \beta_d$ be the conjugates of~$\beta$.
Then there exist constants $c_i \in \mathbb{Q}(\beta_i)$ such that
\begin{equation} \label{eq:Gbetai}
U_{h+k} = \sum_{i=1}^d c_i \beta_i^k \quad \mbox{for all} \quad k \ge 0.
\end{equation}

For any word $z_k \cdots z_0 \in \Sigma^*$ we can write 
\begin{equation} \label{eq:zm}
\sum_{j=0}^{k-h} z_{h+j} \beta^j = \sum_{j=0}^{d-1} m_j \beta^j \quad \mbox{with} \quad m_{d-1} \cdots m_0 \in \mathbb{Z}^d.
\end{equation}
We have $z_k \cdots z_0 \sim_U m_{d-1} \cdots m_0\, z_{h-1} \cdots z_0$ (with $z_j = 0$ for $k \le j < h$), thus 
\begin{equation} \label{eq:zG0}
z_k \cdots z_0 \sim_U 0 \quad \mbox{if and only if} \quad \sum_{j=0}^{d-1} m_j U_{h+j} + \sum_{j=0}^{h-1} z_j U_j = 0.
\end{equation}
By~\eqref{eq:zm}, we obtain
\begin{equation} \label{eq:conjbounded}
\bigg|\sum_{j=0}^{d-1} m_j \beta_i^j\bigg| \le \frac{\max_{a\in\Sigma}|a|}{1-|\beta_i|} \quad \mbox{for} \quad i = 1,2,\ldots,d-1.
\end{equation}
By~\eqref{eq:zG0}, \eqref{eq:Gbetai} and~\eqref{eq:conjbounded}, we obtain that $|\sum_{j=0}^{d-1} m_j \beta^j|$ is bounded as well if $z_k \cdots z_0 \sim_U 0$.
There are only finitely many words $m_{d-1} \cdots m_0 \in \mathbb{Z}^d$ such that all conjugates of $\sum_{j=0}^{d-1} m_j \beta^j$ are bounded.
Therefore, there are only finitely many possibilities for $m_{d-1} \cdots m_0 \in \mathbb{Z}^d$, $z_{h-1} \cdots z_0 \in \Sigma^*$ such that $m_{d-1} \cdots m_0\, z_{h-1} \cdots z_0 \sim_U 0$.
Set
\begin{equation*}
T_U(z_{h-1} \cdots z_0) = \bigg\{\sum_{j=0}^{d-1} m_j \beta_{h+j} + \sum_{j=0}^{h-1} z_j \beta_j\ \Big|\ m_{d-1} \cdots m_0\, z_{h-1} \cdots z_0 \sim_U 0\bigg\}
\end{equation*}
and $M = \max\bigcup_{z'\in\Sigma^h} T_U(z')$.

Let $\mathcal{A}_{U,\Sigma}$ be the automaton with initial state $(0, 0^h)$ and transitions $(s, z_{h-1} \cdots z_0) \stackrel{a}{\rightarrow} (\beta s - a, z_{h-2} \cdots z_0 a)$, $a \in \Sigma$, such that $|\beta s - a| < M + \max_{b\in\Sigma}|b|/(\beta-1)$.  
A~state $(s, z') \in \mathbb{Z}[\beta] \times \Sigma^h$ is terminal if and only if $s \in T_U(z')$.
Then $\mathcal{A}_{U,\Sigma}$ is finite and recognises~$Z_{U,\Sigma}$.
\end{proof}

Now, we can prove Theorem~\ref{thm:ratU}.
As in \cite{Frougny_Steiner2008:minimal_weight_expansions}, we make use of \emph{letter-to-letter transducers}, which are automata with transitions labelled by pairs of digits.
If $(z_k,y_k) \cdots (z_0,y_0)$ is the sequence of labels of a path from an initial to a terminal state, we say that the transducer accepts the pair of words $(z,y)$, with $z = z_k \cdots z_0$ being the input and $y = y_k \cdots y_0$ being the output of the transducer.

\begin{proof}[Proof of Theorem~\ref{thm:ratU}]
By Propositions~\ref{p:DG2} and~\ref{p:LUB}, there exists a positive integer $B$ such that $L_U \subseteq \Sigma^*$ with $\Sigma = \{1-B, \ldots, B-1\}$. 

In the proof of Theorem~3.10 in \cite{Frougny_Steiner2008:minimal_weight_expansions}, it was shown that there exists a finite letter-to-letter transducer $\mathcal{T}$ with the following property:
For every word $z \in (\Sigma L_\beta \cap L_\beta \Sigma) \setminus L_\beta$, i.e., $z \in \Sigma^* \setminus L_\beta$ and every proper factor of $z$ is in~$L_\beta$, there exist integers $\ell, m$ and a word $y \in \Sigma^*$ such that $(0^\ell z\, 0^m, y)$ is the label of a path in~$\mathcal{T}$ leading from $(0,0)$ to $(0,\delta)$, with $\delta < 0$.
The transitions are of the form $(s, \delta) \stackrel{(a,b)}\longrightarrow (\beta s + b - a, \delta + |b| - |a|)$, $a, b \in \Sigma$.
This means that $y \sim_\beta z$ and $\|y\| < \|z\|$.
Since $\mathcal{T}$ is finite, we can choose $m \le K$ for some constant~$K$.
By the assumptions on~$U$, we obtain that $z\, 0^k \sim_U y\, 0^{k-m}$ for all $k \ge h+K$, thus $z\, 0^k \not\in L_U$.
Note that $z\, 0^k \not\in L_U$ implies that $z' z\, z'' \not\in L_U$ for all $z' \in \Sigma^*$, $z'' \in \Sigma^k$.
Now, since $\Sigma^* \setminus L_\beta$ is recognised by a finite automaton, we also have an automaton recognising the set of words $z = z_k \cdots z_0 \in \Sigma^* \setminus L_U$ with $z_k \cdots z_{h+K} \not\in L_\beta$.

It remains to consider the words $z = z_k \cdots z_0 \in \Sigma^* \setminus L_U$ with $z_k \cdots z_{h+K} \in L_\beta$ (if $k \ge h+K$).
Let $y = y_\ell \cdots y_0 \sim_U z$ with $y \in L_U$, and assume w.l.o.g.\ $\ell \ge k$.
All these pairs of words $(0^{\ell-k}z,y)$ are accepted by a letter-to-letter transducer~$\mathcal{T}'$ with $(0,0^h,0^h,0)$ as initial state, transitions
\begin{equation*}
(s,\, z_{h-1} \cdots z_0,\, y_{h-1} \cdots y_0,\, \delta) \stackrel{(a,b)}\longrightarrow (\beta s + b - a,\, z_{h-2} \cdots z_0 a,\, y_{h-2} \cdots y_0 b,\, \delta + |b| - |a|),
\end{equation*}
$a, b \in \Sigma$, and terminal states $(s,z',y',\delta)$ such that $s \in T_U(y') - T_U(z')$, $\delta < 0$.
We show that $\mathcal{T}'$ is a finite transducer.
As in the proof of Proposition~\ref{p:ratZ}, we obtain states $(s,z',y',\delta)$ with $s$ in a finite subset of~$\mathbb{Z}[\beta]$, more precisely $|s| < 2M + 2(B-1)/(\beta-1)$ and the conjugate of~$s$ corresponding to~$\beta_i$ is bounded by $2(B-1)/(1-|\beta_i|)$ for $2 \le i \le d$.
Clearly, there are only finitely many possibilities for $z', y' \in \Sigma^h$.
By the previous paragraph, $y \in L_U$ implies $y_\ell \cdots y_{h+K} \in L_\beta$.
As in the proof of Theorem~3.10 in \cite{Frougny_Steiner2008:minimal_weight_expansions}, for $m \ge h+K$, a large difference $\delta = \|y_k \cdots y_m\| - \|z_k \cdots z_m\|$ contradicts the assumption that $z_k \cdots z_m \in L_\beta$ and $y_k \cdots y_m \in L_\beta$.
Since $h+K$ and $\Sigma$ are finite, the difference between $\|z_k \cdots z_m\|$ and $\|y_k \cdots y_m\|$ is bounded for $0 \le m < h+K$ as well, thus $\mathcal{T}'$ is finite.

If we modify $\mathcal{T}'$ by adding those states to the set of initial states which can be reached from $(0,0^h,0^h,0)$ by a path with input consisting only of zeros, then the input automaton of the modified transducer recognises a subset of $\Sigma^* \setminus L_U$ containing all words $z_k \cdots z_0 \in \Sigma^* \setminus L_U$ with $z_k \cdots z_{h+K} \in L_\beta$.
Therefore, $\Sigma^* \setminus L_U$ is regular as the union of two regular languages, and the complement $L_U$ is regular as well.
\end{proof}

\subsection{Properties of the automata}
The \emph{trim minimal automaton} recognising a set~$H$ is the deterministic automaton with minimal number of states recognising~$H$, where \emph{deterministic} means that there is a unique initial state and from every state there is at most one transition labelled by $a$ for every $a \in \Sigma$.
Let $\mathcal{M}_{U,\Sigma}$ and $\mathcal{M}_{\beta,\Sigma}$ be the trim minimal automata recognising $L_U \cap \Sigma^*$ and $L_\beta \cap \Sigma^*$ respectively; let $A_{U,\Sigma}$ and $A_{\beta,\Sigma}$ be the respective adjacency matrices.
We will see that the automata $\mathcal{M}_{U,\Sigma}$ and $\mathcal{M}_{\beta,\Sigma}$ are closely related.
We show first that the matrix $A_{\beta,\Sigma}$ is primitive, using the following lemma.

\begin{lemma} \label{l:zerosbounded}
Let $\mathcal{T}$ be a finite letter-to-letter transducer with transitions of the form $(s, \delta) \stackrel{(a,b)}\longrightarrow (\beta s + b - a, \delta + |b| - |a|)$, $\beta \ne 0$, $a, b \in \mathbb{Z}$.
Then the number of consecutive zeros in the input of a path in~$\mathcal{T}$ not running through a state of the form $(0,\delta)$ is bounded.
\end{lemma}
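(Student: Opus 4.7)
The plan is a pigeonhole argument that exploits the monotonicity of the second coordinate $\delta$ along a run of input zeros. I would start by parametrising a stretch of $k$ consecutive input-zero transitions along a path in $\mathcal{T}$: it visits states $(s_0, \delta_0), (s_1, \delta_1), \ldots, (s_k, \delta_k)$ joined by transitions with input $0$ and outputs $b_1, \ldots, b_k$, so the transition rule forces $s_i = \beta s_{i-1} + b_i$ and $\delta_i = \delta_{i-1} + |b_i|$. In particular, $\delta$ is non-decreasing along the run.

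If $k$ exceeds the number $|Q|$ of states of $\mathcal{T}$, pigeonhole yields indices $0 \le i < j \le k$ with $(s_i, \delta_i) = (s_j, \delta_j)$. The equality $\delta_i = \delta_j$ combined with monotonicity forces $|b_{i+1}| + \cdots + |b_j| = 0$, so every intermediate output vanishes, and iterating the $s$-recursion with all $b_\ell = 0$ gives $s_j = \beta^{j-i} s_i$. Together with $s_i = s_j$ this yields $s_i(\beta^{j-i} - 1) = 0$. Since $\beta$ is the Pisot number from the standing setting (in particular $\beta > 1$), we have $\beta^{j-i} \ne 1$, so $s_i = 0$. But then the path passes through the state $(0, \delta_i)$, contradicting the hypothesis, and any stretch of consecutive input zeros in a path avoiding all states of the form $(0, \delta)$ contains at most $|Q|$ transitions.

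The only subtle point is the appeal to $\beta^{j-i} - 1 \ne 0$; this is the single place where the Pisot hypothesis on $\beta$ is genuinely used, and it is automatic from $\beta > 1$. Everything else is routine bookkeeping of a pigeonhole cycle together with the observation that $\delta$ cannot decrease when the input letter is zero.
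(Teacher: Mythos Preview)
Your proof is correct, and it is a bit more streamlined than the paper's own argument. The paper proceeds in two separate steps: first, since along a run with input $0^k$ and output $y$ the second coordinate increases by exactly $\|y\|$, finiteness of $\mathcal{T}$ bounds $\|y\|$; second, any maximal block of zeros of length $j$ in the output $y$ sends a state $(s,\delta)$ to $(\beta^j s,\delta)$, so (since the path avoids $s=0$) finiteness of $\mathcal{T}$ again bounds $j$. A bound on $\|y\|$ together with a bound on zero-runs in $y$ then bounds the total length $k$. You collapse these two observations into a single pigeonhole on the full state $(s,\delta)$: the monotonicity of $\delta$ forces the cycle to have zero output throughout, and then the $s$-recursion yields $s_i(\beta^{j-i}-1)=0$.

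Both arguments ultimately need $\beta^{j-i}\ne 1$, i.e.\ that $\beta$ is not a root of unity; the lemma's stated hypothesis $\beta\ne 0$ alone would not suffice (take $\beta=-1$), but you correctly invoke the standing Pisot assumption $\beta>1$, and the paper's proof relies on the same point implicitly. Your route is more direct and gives the explicit bound $|Q|$; the paper's two-step decomposition has the mild advantage that each piece (the bound on $\|y\|$ and the bound on output zero-runs) is conceptually isolated and is echoed in later arguments that cite this lemma.
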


\begin{proof}
Let $(0^k,y)$ be the label of a path starting from $(s,\delta)$ with $s \ne 0$.
Then the path leads to a state $(s', \delta+\|y\|)$, thus $\|y\|$ is bounded by the finiteness of~$\mathcal{T}$.
If $y$ starts with~$0^j$, then the path leads to $(\beta^j s, \delta)$, thus the finiteness of $\mathcal{T}$ implies that $j$ is bounded.
If the path avoids states $(s,\delta)$ with $s = 0$, then the boundedness of $\|y\|$ and the boundedness of consecutive zeros in $y$ imply that $k$, which is the length of $y$, is bounded.
\end{proof}

\begin{prop} \label{p:Aprimitive}
Let $\beta$ be a Pisot number and $0 \in \Sigma \subseteq \mathbb{Z}$.
Then $A_{\beta,\Sigma}$ is primitive.
\end{prop}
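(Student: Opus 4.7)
The strategy is to show that the directed graph underlying $\mathcal{M}_{\beta,\Sigma}$ is strongly connected and has a self-loop; together these imply that $A_{\beta,\Sigma}$ is primitive.

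First I would produce a self-loop at the initial state~$q_0$. Since $0\in\Sigma$ and a leading zero affects neither the value nor the weight of a $\beta$-expansion, we have $0\, L_\beta = L_\beta$. Hence the right language read from $q_0\cdot 0$ equals that of~$q_0$, and by minimality of $\mathcal{M}_{\beta,\Sigma}$ we obtain $q_0\cdot 0 = q_0$. This cycle of length~$1$ guarantees aperiodicity as soon as strong connectedness is established.

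For strong connectedness it suffices to exhibit, for every state~$q$, a path from~$q$ back to~$q_0$. Write $q = q_0 \cdot v$; a standard substitution argument shows that $L_\beta$ is factorial (if $v$ were not in~$L_\beta$, replacing the factor~$v$ inside some $z \in L_\beta$ by a $\sim_\beta$-equivalent of strictly smaller weight would contradict $z\in L_\beta$), so $v \in L_\beta \cap \Sigma^*$. The central claim is then: for all sufficiently large~$k$,
\begin{equation*}
v\,0^k u \in L_\beta \iff u \in L_\beta \quad \mbox{for every}\ u \in \Sigma^*.
\end{equation*}
This equates the right languages of $q\cdot 0^k$ and~$q_0$, forcing $q\cdot 0^k = q_0$ in the minimal automaton. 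The direction $(\Rightarrow)$ follows from factoriality.

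The main obstacle will be the direction~$(\Leftarrow)$. Here I would apply Lemma~\ref{l:zerosbounded} to the letter-to-letter transducer built in the proof of Theorem~3.10 of~\cite{Frougny_Steiner2008:minimal_weight_expansions}, whose transitions have exactly the form $(s,\delta) \stackrel{(a,b)}{\to} (\beta s + b - a,\, \delta + |b| - |a|)$ required by that lemma. Supposing for contradiction that some $y \sim_\beta v\,0^k u$ has $\|y\| < \|v\| + \|u\|$, the transducer admits an accepting path on input $v\,0^k u$ with output~$y$; for $k$ above the bound of Lemma~\ref{l:zerosbounded}, this path must meet a state $(0,\delta')$ while reading the zeros. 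Such a state splits $y = y'y''$ with $y'\sim_\beta v$ and $y''\sim_\beta u$, whence
\begin{equation*}
\|y\| - (\|v\|+\|u\|) = (\|y'\|-\|v\|) + (\|y''\|-\|u\|) \ge 0
\end{equation*}
by $v,u \in L_\beta$, contradicting the choice of~$y$ and proving the claim.
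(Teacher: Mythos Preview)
Your argument is essentially the paper's own: both show that $0^k$ is a synchronizing word to the initial state by invoking Lemma~\ref{l:zerosbounded} on the transducer of \cite[Theorem~3.10]{Frougny_Steiner2008:minimal_weight_expansions}, and then deduce primitivity from strong connectedness together with the $0$-loop at~$q_0$. The one place where you are looser than the paper is in feeding $v\,0^k u$ directly to the transducer: that finite transducer is only guaranteed to accept (zero-padded) \emph{minimal} forbidden words, so one should first pass to a minimal forbidden factor of $v\,0^k u$---which, because $v$, $u$, $v\,0^j$, $0^j u$ and $0^j$ all lie in $L_\beta$, is necessarily of the form $v'\,0^k u'$ with $v',u' \in L_\beta$---after which your splitting argument at a state $(0,\delta')$ goes through verbatim.
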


\begin{proof}
We show that, from every state in $\mathcal{M}_{\beta,\Sigma}$, the path labelled by $0^k$ leads to the initial state if $k$ is sufficiently large.

First note that $z \in L_\beta$ implies $z\, 0^k \in L_\beta$ for all $k \ge 0$, thus there always exists a path labelled by~$0^k$.
Suppose that this path does not lead to the initial state from some state.
Then there exist words $z, z' \in L_\beta \cap \Sigma^*$ with $z\, 0^k\, z' \not\in L_\beta$.
We can assume w.l.o.g.\ $z\, 0^k\, z' \in (\Sigma L_\beta \cap L_\beta \Sigma) \setminus L_\beta$.
As in the proof of Theorem~\ref{thm:ratU}, there exist integers $\ell, m$ and a word $y \in \Sigma^*$ such that $(0^\ell z\, 0^m, y)$ is the label of a path in~$\mathcal{T}$ leading from $(0,0)$ to $(0,\delta)$, with $\delta < 0$. 
If this path ran through a state $(0,\delta)$ while reading the input $0^k$ between $z$ and~$z'$, then the corresponding prefix of~$y$ would be a word $y' \sim_\beta z$ and the corresponding suffix of~$y$ would be a word $y'' \sim_\beta z'$. 
Since $\|y'\| + \|y''\| = \|y\| < \|z\| + \|z'\|$, we had $\|y'\| < \|z\|$ or $\|y''\| < \|z'\|$, contradicting that $z, z' \in L_\beta$.
Therefore, Lemma~\ref{l:zerosbounded} yields that $k$ is bounded.

Hence, for sufficiently large~$k$, $0^k$ is a synchronizing word of $\mathcal{M}_{\beta,\Sigma}$ leading to the initial state.
Since $\mathcal{M}_{\beta,\Sigma}$ was assumed to be a trim minimal automaton, this implies that $\mathcal{M}_{\beta,\Sigma}$ is strongly connected, thus $A_{\beta,\Sigma}$ is irreducible.
Now, the primitivity of $A_{\beta,\Sigma}$ follows from the fact that there is a loop labelled by $0$ in the initial state. 
\end{proof}

\begin{prop} \label{p:Gequalbeta}
Let $U$ be as in Theorem~\ref{thm:ratU} and $0 \in \Sigma \subseteq \mathbb{Z}$.
Then the automaton $\mathcal{M}_{U,\Sigma}$ has a unique strongly connected component. Up to the set of terminal states, this component is equal to~$\mathcal{M}_{\beta,\Sigma}$.
\end{prop}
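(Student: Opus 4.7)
The plan is to exhibit an isomorphism of labelled graphs between a subgraph of $\mathcal{M}_{U,\Sigma}$ and $\mathcal{M}_{\beta,\Sigma}$, and then argue that this subgraph is the unique nontrivial strongly connected component of $\mathcal{M}_{U,\Sigma}$.

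The key input is the constant $K$ used in the proof of Theorem~\ref{thm:ratU}: any $z = z_k \cdots z_0 \in L_U \cap \Sigma^*$ with $k \ge h+K$ must have $z_k \cdots z_{h+K} \in L_\beta$, and the transducer $\mathcal{T}'$ constructed there detects $L_U$-membership by (i) verifying that the ``high'' part $z_k\cdots z_{h+K}$ lies in $L_\beta$, and (ii) running a finite-state condition on $z_{h+K-1} \cdots z_0$ that depends on $z_k\cdots z_{h+K}$ only through the state reached by it in $\mathcal{M}_{\beta,\Sigma}$. Combined with the converse supplied by Proposition~\ref{p:LUB} (every long element of $L_\beta \cap \Sigma^*$ admits a compatible low-index tail completing it to an element of $L_U$), this says that the $L_U$-residual of a sufficiently long prefix is determined by its $L_\beta$-residual.

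Accordingly, I would define a map $\phi$ from the states of $\mathcal{M}_{\beta,\Sigma}$ to those of $\mathcal{M}_{U,\Sigma}$ by letting $\phi(q)$ be the state of $\mathcal{M}_{U,\Sigma}$ reached by any sufficiently long prefix $z \in L_\beta \cap \Sigma^*$ ending at $q$ in $\mathcal{M}_{\beta,\Sigma}$. Well-definedness is precisely the factorization property above. Injectivity is dual: the $L_U$-residual of $z$ determines its $L_\beta$-residual, hence $q$. Preservation of labelled transitions is immediate from the definitions. Since $\mathcal{M}_{\beta,\Sigma}$ is strongly connected by Proposition~\ref{p:Aprimitive}, the image of $\phi$ is a strongly connected subgraph of $\mathcal{M}_{U,\Sigma}$ that is isomorphic to $\mathcal{M}_{\beta,\Sigma}$ once terminal states are forgotten.

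For uniqueness of the strongly connected component: every state of $\mathcal{M}_{U,\Sigma}$ that lies on a cycle is reachable by arbitrarily long words (iterate the cycle after first reaching it from the initial state), and every state reachable by a sufficiently long word lies in the image of $\phi$ by construction; states outside the image of $\phi$ are therefore transient. The main obstacle I anticipate is rigorously establishing the ``factorization through the $\beta$-state'' in the key input: this needs a careful dissection of $\mathcal{T}'$ so that both well-definedness and injectivity of $\phi$ fall out cleanly, and in particular confirms that the differences between $\mathcal{M}_{U,\Sigma}$ and $\mathcal{M}_{\beta,\Sigma}$ really do concentrate entirely in the terminal-state data, as the statement asserts.
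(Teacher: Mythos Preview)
There are two concrete gaps. First, Proposition~\ref{p:LUB} does not supply the ``converse'' you need: it only guarantees that every integer has \emph{some} minimal-weight $U$-expansion on a bounded alphabet, not that a given $z \in L_\beta \cap \Sigma^*$ can be extended to an element of $L_U$. Without this, you do not even know that such a $z$ labels a path in the trim automaton $\mathcal{M}_{U,\Sigma}$. Second, the factorization you correctly flag as the main obstacle does not follow from the structure of $\mathcal{T}'$ as you describe it: a state of $\mathcal{T}'$ is a tuple $(s,z',y',\delta)$ with $s \in \mathbb{Z}[\beta]$ and $z' \in \Sigma^h$ recording the last $h$ input digits, and these depend on the actual digits of the prefix, not merely on its $\mathcal{M}_{\beta,\Sigma}$-state. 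So there is no evident reason why two long $L_\beta$-prefixes with the same $L_\beta$-residual should yield the same set of reachable $\mathcal{T}'$-states, and hence the same $L_U$-residual; well-definedness of $\phi$ remains open.

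The paper avoids both difficulties by arguing at the level of words rather than constructing a state map. It shows directly: (i) every $z \in L_\beta$ satisfies $z\,0^k \in L_U$ for all large $k$, since otherwise an accepting run of $\mathcal{T}'$ on $(0^\ell z\,0^k,y)$ would, by the argument of Lemma~\ref{l:zerosbounded}, pass through a state with first coordinate $0$ while reading the trailing $0^k$, producing $y \sim_\beta z$ with $\|y\|<\|z\|$; and (ii) if the path of $z$ ends in a strongly connected component, then $z\,z' \in L_U$ for arbitrarily long $z'$, hence $z\,0^{|z'|} \in L_U$ (any lighter representative of $z\,0^{|z'|}$ could be overlaid with $z'$ to beat $z\,z'$), hence $z \in L_\beta$. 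Together these say that the words labelling paths from the initial state into the strongly connected part of $\mathcal{M}_{U,\Sigma}$ are exactly those of $L_\beta \cap \Sigma^*$, from which the identification with $\mathcal{M}_{\beta,\Sigma}$ follows. Note in particular that (i) is precisely the statement you actually needed in place of Proposition~\ref{p:LUB}.
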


\begin{proof}
We first show that every word $z \in L_\beta$ is the label of a path starting in the initial state of~$\mathcal{M}_{U,\Sigma}$.
Suppose that $z\, 0^k \not\in L_U$ for some large $k \ge 0$, then there exists an integer~$\ell$ and a word $y \in \Sigma^*$ such that $(0^\ell z\,0^k, y)$ is accepted by the finite transducer $\mathcal{T}'$ in the proof of Theorem~\ref{thm:ratU}.
As in Lemma~\ref{l:zerosbounded}, we obtain that the path must run through a state $(0,z',y',\delta)$ while reading~$0^k$, which implies that $y \sim_\beta z$.
Moreover, we have $\delta < 0$, thus $\|y\| < \|z\|$, contradicting that $z \in L_\beta$.
This shows that the directed graph $\mathcal{M}_{U,\Sigma}$ contains the directed graph~$\mathcal{M}_{\beta,\Sigma}$.

Now, consider an arbitary word $z \in L_U$ such that the corresponding path ends in a strongly connected component of~$\mathcal{M}_{U,\Sigma}$.
This means that we have $z\, z' \in L_U$ for arbitarily long words~$z'$.
Since $z\, z' \in L_U$ implies $z\, 0^k \in L_U$, where $k$ is the length of~$z'$, we obtain that $z \in L_\beta$.
Therefore, the strongly connected components of $\mathcal{M}_{U,\Sigma}$ are contained in~$\mathcal{M}_{\beta,\Sigma}$.
Since $\mathcal{M}_{\beta,\Sigma}$ has a unique strongly connected component by Proposition~\ref{p:Aprimitive}, the same holds for~$\mathcal{M}_{U,\Sigma}$.
\end{proof}

In Section~\ref{sec:aver-numb-repr}, we also use that the difference between the length of the longest $U$-expansion of minimal weight (without leading zeros) and e.g.\ the greedy $U$-expansion is bounded.

\begin{lemma} \label{l:K}
Let $U$ be as in Theorem~\ref{thm:ratU} and $\Sigma$ a finite subset of~$\mathbb{Z}$. 
Let $z = z_k \cdots z_0 \in \Sigma^*$ with $z_k \ne 0$, $y = y_\ell \cdots y_0 \in L_U$ with $y_\ell \ne 0$.
There exists a constant $m \ge 0$ such that $z \sim_U y$ implies $\ell \le k+m$.
\end{lemma}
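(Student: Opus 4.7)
The plan is to sandwich the integer $n=\sum_{j=0}^k z_j U_j=\sum_{j=0}^\ell y_j U_j$ between an upper bound in~$\beta^k$ coming from the length of~$z$ and a lower bound in~$\beta^\ell$ coming from the fact that $y\in L_U$ has leading digit $y_\ell\ne 0$; combining the two bounds will force $\beta^\ell\le C\beta^k$, yielding $\ell\le k+m$ for an appropriate constant~$m$.

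The upper bound is routine: since $|z_j|\le M:=\max_{a\in\Sigma}|a|$ and $U_j=c\beta^j+\mathcal{O}(|\beta_2|^j)$ by~\eqref{eq:Gk}, summing gives $|n|\le M\sum_{j=0}^k U_j\le C_1\beta^k$ for some constant~$C_1$. For the lower bound I would first invoke Proposition~\ref{p:LUB} to restrict $y$ to the alphabet $\Sigma'=\{1-B,\dots,B-1\}$. Setting $v=\sum_{j=0}^\ell y_j\beta^j$, the asymptotic~\eqref{eq:Gk} gives $n=cv+\mathcal{O}(1)$, so it suffices to prove $|v|\ge c_2\beta^\ell-c_3$ for constants $c_2>0$ and $c_3\ge 0$. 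To access the structure of~$y$, I would then apply Proposition~\ref{p:Gequalbeta}: the unique strongly connected component of $\mathcal{M}_{U,\Sigma'}$ coincides with $\mathcal{M}_{\beta,\Sigma'}$ up to terminal states, and any accepted path leaves this component only through a tail of length bounded by some constant~$P$. Writing $y=y_H\,y_L$ with $|y_L|\le P$, the head $y_H$ is read entirely inside the strongly connected component, behaves essentially like an element of~$L_\beta$, and contributes the dominant part of~$v$, while $y_L$ contributes only $\mathcal{O}(1)$.

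The main obstacle is the sub-claim that a word of $L_\beta\cap(\Sigma')^*$ with a nonzero leading digit at position~$\ell$ has $\beta$-value of magnitude at least $c_2'\beta^\ell$ up to an additive constant. The guiding idea is that a significantly smaller $|v|$ would force cancellation among the top $K$ digits of~$y$, for some uniform~$K$ dictated by the sizes of $\mathcal{M}_{U,\Sigma'}$ and $\mathcal{M}_{\beta,\Sigma'}$; since only finitely many top blocks of length~$K$ exist, the minimal-weight property together with a pumping/rewriting argument in the spirit of the transducer~$\mathcal{T}'$ from the proof of Theorem~\ref{thm:ratU} would let one replace the top block of~$y$ by a strictly shorter $U$-expansion of the same value and no larger weight, contradicting $y\in L_U$. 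A separate treatment is needed in the degenerate case $v=0$: there $|n|$ is bounded by a constant, hence $\|y\|$ is bounded by the minimality of~$y$, so $y$ has only finitely many nonzero digits, and the inequality $|y_\ell U_\ell|\le|n|+\sum_{j<\ell}|y_j|U_j$ forces the second-highest nonzero position of~$y$ to lie within a bounded gap of~$\ell$; controlling the admissible zero-sum loops in $\mathcal{M}_{U,\Sigma'}$ via its SCC structure then bounds~$\ell$ outright. Combining the lower and upper bounds gives the constant~$m$.
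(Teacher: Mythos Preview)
Your magnitude-sandwich strategy is sound in outline and genuinely different from the paper's transducer argument, but the step you yourself call ``the main obstacle'' is a real gap. The sketch you give --- that a small $|v|/\beta^\ell$ ``forces cancellation among the top $K$ digits'' and then a ``pumping/rewriting argument in the spirit of $\mathcal{T}'$'' lets you ``replace the top block of $y$ by a strictly shorter $U$-expansion of the same value and no larger weight, contradicting $y\in L_U$'' --- is not an argument. A small ratio $|v|/\beta^\ell$ does not single out any particular finite prefix pattern; you never say what the replacement word is; and ``no larger weight'' cannot contradict $y\in L_U$ (you would need \emph{strictly smaller} weight, and you give no mechanism for producing it). The separate $v=0$ case is also superfluous: once the lower bound is established, $v=0$ cannot occur for large $\ell$, and your handling of it trails off into another vague appeal to ``controlling the admissible zero-sum loops''.

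The claim you need is true, but by a different mechanism than the one you suggest. If $w\in L_\beta\cap(\Sigma')^*$ with $w_\ell\ne0$ could have $|v|/\beta^\ell$ arbitrarily small, a compactness/diagonal argument produces an infinite sequence $(\tilde w_j)_{j\le0}$ with $\tilde w_0\ne0$, every prefix $\tilde w_0\cdots\tilde w_{-k}$ in $L_\beta$, and $\sum_{j\le0}\tilde w_j\beta^j=0$. The running values $s_k=\sum_{j=0}^k\tilde w_{-j}\beta^{k-j}\in\mathbb{Z}[\beta]$ then have all Galois conjugates bounded (Pisot for the small conjugates, the vanishing tail for $\beta$ itself), hence take finitely many values; a repetition $s_k=s_{k'}$ with $k<k'$ gives two prefixes in $L_\beta$ with the same $\beta$-value and therefore the same weight, forcing $\tilde w_{-k-1}=\cdots=\tilde w_{-k'}=0$, whence $s_{k'}=\beta^{k'-k}s_k=s_k$ and $s_k=0$, contradicting $\tilde w_0\cdots\tilde w_{-k}\in L_\beta$ with $\tilde w_0\ne0$. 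The paper bypasses all of this: it feeds $(0^{\ell-k}z,y)$ into the transducer from the proof of Theorem~\ref{thm:ratU}, uses $y\in L_U$ to keep the weight-difference coordinate $\delta$ bounded while the input reads $0^{\ell-k}$, and then invokes Lemma~\ref{l:zerosbounded} to bound $\ell-k$ directly.
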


\begin{proof}
For $\ell < k$, the assertion is trivially true. 
If $\ell \ge k$, then $(0^{\ell-k} z,y)$ is accepted by a transducer similar to $\mathcal{T}'$ in the proof of Theorem~\ref{thm:ratU}, with states $(s,z',y',\delta)$ such that $s$ is in a finite set.
Now $\delta$ can be unbounded.
However, $y \in L_U$ implies that the path labelled by $(0^{\ell-k}, y_\ell \cdots y_{k+1})$ starting from $(0,0^h,0^h,0)$ runs through states $(s,z',y',\delta)$ with bounded~$\delta$, cf.\ the proof of Theorem~3.10 in \cite{Frougny_Steiner2008:minimal_weight_expansions}.
As in Lemma~\ref{l:zerosbounded}, we obtain that $\ell-k$ is bounded.
\end{proof}

\section{Average number of representations}
\label{sec:aver-numb-repr}
In this section we study the function $f(n)$ counting the number of different
$U$-expansions of minimal weight (without leading zeros) of the integer~$n$ in~$\Sigma^*$, with $\{0,1\} \subseteq \Sigma \subseteq \mathbb{Z}$.
As in Theorem~\ref{thm:ratU}, $U = (U_k)_{k\ge0}$ is assumed to be a strictly increasing sequence of integers with $U_0 = 1$, satisfying eventually a linear recurrence with characteristic polynomial equal to the minimal polynomial of a Pisot number~$\beta$.
We will give precise asymptotic information about the average number of representations
$\frac1{2N-1} \sum_{|n|<N} f(n)$. As a general reference for the study of the asymptotic
behaviour of digital functions we refer to
\cite{Drmota_Grabner2010:analysis_of_digital}. In order to exhibit the
fluctuating main term of this sum we introduce a measure $\mu$ on $\big[\frac{\min\Sigma}{\beta-1}, \frac{\max\Sigma}{\beta-1}\big]$. The
construction of this measure is similar to the distribution measures of
infinite Bernoulli convolutions as studied in
\cite{Erdos1939:family_symmetric_bernoulli}. There it encodes the number of
representations of integers as sums of Fibonacci numbers.

As in Section~\ref{sec:beta-expans-minim}, let $\mathcal{M}_{U,\Sigma}$ be the trim minimal automaton recognising $L_U \cap \Sigma^*$.
Denote by $A_{U,a}$ the adjacency matrix of all transitions in $\mathcal{M}_{U,\Sigma}$ labelled by the digit~$a$. 
The total adjacency matrix of the automaton is then $A_{U,\Sigma} = \sum_{a\in\Sigma} A_{U,a}$.
Let $\mathcal{M}_{\beta,\Sigma}, A_{\beta,a}, A_{\beta,\Sigma}$ be the corresponding objects for $\beta$-expansions.

Let $f_k(n)$ denote the number of words $z \in L_U \cap \Sigma^k$ with $z \sim_U n$, i.e., the number of $U$-expansions of minimal weight of length $k$ of an integer~$n$. 
If $|n| < U_k$, then the length of the greedy $U$-expansion of $|n|$ is at most~$k$.
Then, by Lemma~\ref{l:K}, there exists a constant $m \ge 0$ such that every $U$-expansions of minimal weight without leading zeros is of length at most~$k+m$.
By adding leading zeros, we obtain that $f_j(n) = f(n)$ for all $j \ge k+m$. 

We define a sequence of measures by
\begin{equation}\label{eq:massn}
\mu_k = \frac1{M_k} \sum_{n\in\mathbb{Z}} f_k(n)\, \delta_{\frac{n}{U_k}},
\end{equation}
where $\delta_x$ denotes the unit point mass concentrated in $x$ and
\begin{equation*}
M_k = \sum_{n\in\mathbb{Z}} f_k(n) = \#(L_U \cap \Sigma^k).
\end{equation*}
We notice that all points $\frac{n}{U_k}$ with $f_k(n)>0$ lie in the interval $\big(\sum_{j=0}^{k-1} \frac{U_j}{U_k}\big) [\min\Sigma, \max\Sigma]$.

As a first step of reduction we replace the measure $\mu_k$ by the measure $\nu_k$ given by
\begin{equation*} 
\nu_k = \frac1{M_k} \sum_{z \in L_U\cap\Sigma^k} \delta_{g(z)} \quad \mbox{with} \quad g(z_{k-1} \cdots z_0) = \sum_{j=0}^{k-1} z_j \beta^{j-k}.
\end{equation*}
By~\eqref{eq:Gk}, we have
\begin{equation*}
\sum_{j=0}^{k-1} z_j \frac{U_j}{U_k} - g(z_{k-1} \cdots z_0) = \mathcal{O}(\beta^{-k}),
\end{equation*}
thus
\begin{equation}\label{eq:mu-nu}
|\widehat{\mu}_k(t) - \widehat{\nu}_k(t)| = \mathcal{O}(|t|\, \beta^{-k}).
\end{equation}
From this it follows that $(\mu_k)_k$ and $(\nu_k)_k$ tend to the
same limiting measure~$\mu$.

In order to compute the characteristic function of $\nu_k$ we consider the
weighted adjacency matrix of $\mathcal{M}_{U,\Sigma}$,
\begin{equation*} 
A_{U,\Sigma}(t) = \sum_{z\in\Sigma} e(z t)\, A_{U,z},
\end{equation*}
where we use the notation $e(t) = e^{2\pi it}$. 
Then we have
\begin{equation*}
\widehat{\nu}_k(t) = \frac1{M_k} \sum_{z \in L_U\cap\Sigma^k} e\big(g(z) t\big) =
\frac1{M_k} \mathbf{v}_1 A_{U,\Sigma}\big(e(t\beta^{-1})\big) A_{U,\Sigma}\big(e(t\beta^{-2})\big) \cdots A_{U,\Sigma}\big(e(t\beta^{-k})\big) \mathbf{v}_2,
\end{equation*}
where $\mathbf{v}_1$ is the indicator (row) vector of the initial state of~$\mathcal{M}_{U,\Sigma}$ and $\mathbf{v}_2$ is the indicator (column) vector of the terminal states of~$\mathcal{M}_{U,\Sigma}$. 

\begin{lemma} \label{lem17}
The adjacency matrix $A_{U,\Sigma} = A_{U,\Sigma}(0)$ of the automaton $\mathcal{M}_{U,\Sigma}$ has a unique dominating eigenvalue~$\alpha$, which is positive and of multiplicity~$1$.
\end{lemma}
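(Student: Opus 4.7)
The plan is to reduce the spectral question for $A_{U,\Sigma}$ to Perron--Frobenius applied to $A_{\beta,\Sigma}$, by exploiting the structural description of $\mathcal{M}_{U,\Sigma}$ provided by Propositions~\ref{p:Aprimitive} and~\ref{p:Gequalbeta}.

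By Proposition~\ref{p:Gequalbeta}, $\mathcal{M}_{U,\Sigma}$ has a unique non-trivial strongly connected component, which, viewed as a labelled directed graph, coincides with $\mathcal{M}_{\beta,\Sigma}$; in particular the adjacency matrix of this component is exactly $A_{\beta,\Sigma}$. Every remaining state of $\mathcal{M}_{U,\Sigma}$ lies on no cycle, since otherwise it would belong to a second non-trivial strongly connected component; in particular no such state carries a self-loop. Ordering the states so that those of the non-trivial component come first, and the transient states are listed afterwards in reverse topological order, the matrix $A_{U,\Sigma}$ takes the block upper-triangular form
\begin{equation*}
A_{U,\Sigma} = \begin{pmatrix} A_{\beta,\Sigma} & * \\ 0 & N \end{pmatrix},
\end{equation*}
where $N$ is strictly upper-triangular and therefore nilpotent. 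Consequently the spectrum of $A_{U,\Sigma}$ equals the spectrum of $A_{\beta,\Sigma}$ together with the eigenvalue $0$ of algebraic multiplicity equal to the number of transient states.

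I would then invoke Proposition~\ref{p:Aprimitive}: $A_{\beta,\Sigma}$ is a primitive non-negative integer matrix, so the Perron--Frobenius theorem yields a simple real eigenvalue $\alpha > 0$ which strictly dominates every other eigenvalue of $A_{\beta,\Sigma}$ in modulus. The self-loop labelled~$0$ at the initial state in fact forces $\alpha \ge 1$, so the extra eigenvalues $0$ contributed by $N$ cannot compete, and $\alpha$ remains the unique dominating eigenvalue of $A_{U,\Sigma}$, of algebraic multiplicity one.

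The main obstacle is precisely the justification that the states outside the unique strongly connected component lie on no cycle, so that the bottom-right block $N$ really is nilpotent and contributes nothing to the spectral radius; once this is secured, the remainder is elementary linear algebra combined with classical Perron--Frobenius theory. This justification follows directly from the uniqueness clause of Proposition~\ref{p:Gequalbeta}, interpreted under the standard convention that any cycle (including a single self-loop) generates a non-trivial strongly connected component of its own.
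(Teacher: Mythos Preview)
Your proof is correct and follows essentially the same approach as the paper: the paper's one-line argument simply asserts that, by Proposition~\ref{p:Gequalbeta}, every non-zero eigenvalue of $A_{U,\Sigma}$ is an eigenvalue of $A_{\beta,\Sigma}$ with the same multiplicity, and then invokes Proposition~\ref{p:Aprimitive} together with the Perron--Frobenius theorem. Your block upper-triangular decomposition with nilpotent complementary block is precisely the linear-algebraic content behind that assertion, spelled out in more detail.
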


\begin{proof}
By Proposition~\ref{p:Gequalbeta}, every non-zero eigenvalue of $A_{U,\Sigma}$ is an eigenvalue of~$A_{\beta,\Sigma}$, with the same multiplicity.
Therefore, the lemma follows from Proposition~\ref{p:Aprimitive} and the Perron-Frobenius theorem.
\end{proof}

By Lemma~\ref{lem17}, there exists a positive constant $C$ such that 
\begin{equation}\label{eq:gesamtmasse_n}
M_k = \mathbf{v}_1 A_{U,\Sigma}^k \mathbf{v}_2 = C \alpha^k  + \mathcal{O}\big((|\alpha_2|+\varepsilon)^k\big)
\end{equation}
for every $\varepsilon>0$, where $\alpha$ and $\alpha_2$ are the largest and second
largest roots of the characteristic polynomial of $A_{U,\Sigma}$.

\begin{lemma}\label{lem-norm}
  Let $A$ be a $n \times n$-matrix with complex entries. There exists a matrix
  norm $\|\cdot\|$ satisfying $\|A\| = \rho(A)$ (the spectral radius) if and
  only if for all eigenvalues $\lambda$ of $A$ with $|\lambda| = \rho(A)$ the
  algebraic and geometric multiplicity are equal.
\end{lemma}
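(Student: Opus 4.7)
The plan is to prove both directions from the Jordan normal form, using that every submultiplicative matrix norm satisfies $\rho(A) \le \|A\|$ together with a sharp asymptotic estimate of $\|A^k\|$.

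For necessity, assume $\|A\| = \rho(A)$. Submultiplicativity yields $\|A^k\| \le \|A\|^k = \rho(A)^k$ for every $k \ge 1$. If some eigenvalue $\lambda$ with $|\lambda| = \rho(A)$ had a Jordan block of size $s \ge 2$, then a direct computation in the Jordan basis shows that the $(1,s)$-entry of $J^k$ equals $\binom{k}{s-1}\lambda^{k-s+1}$, whose modulus is of exact order $k^{s-1}\rho(A)^k$. Because all norms on $M_n(\mathbb{C})$ are equivalent, this would force $\|A^k\|/\rho(A)^k \to \infty$, a contradiction. Hence every eigenvalue of modulus $\rho(A)$ has only Jordan blocks of size one, which is the equality of algebraic and geometric multiplicities.

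For sufficiency, we construct an induced operator norm (a particular case of submultiplicative matrix norm). Decompose $\mathbb{C}^n = V_1 \oplus V_2$ into $A$-invariant subspaces, with $V_1$ the sum of eigenspaces for eigenvalues of modulus $\rho(A)$ and $V_2$ the sum of generalised eigenspaces for the remaining eigenvalues. By hypothesis, $A|_{V_1}$ is diagonalisable with spectrum on the circle $|z| = \rho(A)$, so taking a basis of eigenvectors and equipping $V_1$ with the corresponding $\ell^\infty$-norm makes the induced operator norm of $A|_{V_1}$ exactly $\rho(A)$. Since $\rho(A|_{V_2}) < \rho(A)$, the classical construction (Jordan form with a rescaling of the superdiagonals by a small parameter) provides a vector norm on $V_2$ whose induced operator norm bounds $A|_{V_2}$ by $\rho(A|_{V_2}) + \varepsilon \le \rho(A)$ for a suitable $\varepsilon$. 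Taking the maximum $\|v_1 + v_2\| = \max(\|v_1\|_{V_1}, \|v_2\|_{V_2})$ on the direct sum and exploiting $A$-invariance gives $\|Av\| \le \rho(A)\|v\|$, while the reverse inequality is automatic from $\rho(A) \le \|A\|$.

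The main delicate point is in $(\Leftarrow)$: one must combine the two vector norms on $V_1$ and $V_2$ without inflation, which requires the splitting to be $A$-invariant in such a way that $A|_{V_1}$ is actually diagonal in some basis. This is precisely where the hypothesis enters, since it guarantees $V_1$ is spanned by honest eigenvectors rather than generalised ones.
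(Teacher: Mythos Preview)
Your argument is correct and follows essentially the same approach as the paper: both proofs split $\mathbb{C}^n$ into the span of eigenvectors for eigenvalues on the spectral circle and the complementary invariant subspace, equip the first with a diagonal-basis norm and the second with the standard $\rho+\varepsilon$ construction, and combine the two; for necessity both exploit the polynomial growth of $A^k$ coming from a nontrivial Jordan block on the spectral circle. The only cosmetic differences are that the paper uses the $\ell^1$-norm on the diagonal part and a sum to combine the two pieces (you use $\ell^\infty$ and a max), and for necessity the paper phrases the growth via a length-two Jordan chain and then passes from induced norms to general matrix norms via Horn--Johnson, whereas you argue directly from matrix entries and equivalence of norms on $M_n(\mathbb{C})$.
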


\begin{proof}
Assume that for all $\lambda$ with $|\lambda| = \rho(A)$ the algebraic and
geometric multiplicities are equal. Then there exists a non-singular matrix~$S$,
such that
\begin{equation*}
SAS^{-1}=
\begin{pmatrix}
\lambda_1&0&0&0&0&\ldots&0\\
0&\lambda_2&0&0&0&\ldots&0\\
0&0&\ddots&0&\vdots&\ddots&\vdots\\
0&0&\ldots&\lambda_r&0&\ldots&0\\
0&0&\ldots&0&\\
\vdots&\vdots&\ddots&\vdots&&B\\
0&0&\ldots&0
\end{pmatrix},
\end{equation*}
with $|\lambda_1|=\cdots=|\lambda_r|=\rho(A)$ and $\rho(B)<\rho(A)$. Then by
\cite[Lemma~5.6.10 and Theorem~5.6.26]{Horn_Johnson1985:matrix_analysis} there
is a norm $\|\cdot\|_{n-r}$ on $\mathbb{C}^{n-r}$ such that the induced norm on
matrices satisfies $\|B\|<\rho(A)$. Define the norm on $\mathbb{C}^n$ by
\begin{equation*}
\|\mathbf{x}\|=\|\mathrm{pr}_1S\mathbf{x}\|_r+
\|\mathrm{pr}_2S\mathbf{x}\|_{n-r},
\end{equation*}
where $\mathrm{pr}_1$ denotes the projection to the first $r$ coordinates and
$\mathrm{pr}_2$ the projection to the $n-r$ last coordinates; $\|\cdot\|_r$ is
just the $\ell^1$-norm on $\mathbb{C}^r$. Then we have
\begin{equation*}
\frac{\|A\mathbf{x}\|}{\|\mathbf{x}\|}=
\frac{\rho(A)\|\mathrm{pr}_1S\mathbf{x}\|_r+\|B\mathrm{pr}_2S\mathbf{x}\|_{n-r}}
{\|\mathrm{pr}_1S\mathbf{x}\|_r+\|\mathrm{pr}_2S\mathbf{x}\|_{n-r}}\leq\rho(A)
\end{equation*}
and therefore $\|A\|=\rho(A)$ by the fact that $\|A\|\geq\rho(A)$ for all
norms. Here we have used
$\mathrm{pr}_2SAS^{-1}=B\mathrm{pr}_2$.

If on the other hand $A$ is not diagonalisable for some $\lambda$ with
$|\lambda| = \rho(A)$, then there exist two vectors $\mathbf{e}_1$ and
$\mathbf{e}_2$ such that
\begin{equation*}
A\mathbf{e}_1=\lambda \mathbf{e}_1+\mathbf{e}_2\quad\text{and }
A\mathbf{e_2}=\lambda\mathbf{e}_2.
\end{equation*}
Then we have
\begin{equation*}
A^k\mathbf{e}_1=\lambda^k\mathbf{e}_1+k\lambda^{k-1}\mathbf{e}_2.
\end{equation*}
Let $\|\cdot\|$ be any norm on $\mathbf{C}^n$. Then 
\begin{equation*}
\|\lambda^{-k}A^k\mathbf{e}_1\|=\|\mathbf{e}_1+k\lambda^{-1}\mathbf{e}_2\|\geq
k|\lambda|^{-1}\|\mathbf{e}_1\|-\|\mathbf{e}_2\|
\end{equation*}
shows that $\|\lambda^{-k}A^k\mathbf{e}_1\|$ is unbounded, whereas
$\|A\|=\rho(A)$ would imply that this sequence is bounded by
$\|\mathbf{e}_1\|$. Thus there is no induced matrix norm with
$\|A\|=\rho(A)$. Since by
\cite[Theorem~5.6.26]{Horn_Johnson1985:matrix_analysis} for every norm there is
an induced norm, which is smaller, there cannot exist a matrix norm with
$\|A\|=\rho(A)$.
\end{proof}

By Lemma~\ref{lem-norm} there exists a norm on $\mathbb{C}^{\#\,\mathrm{states}\,\mathrm{of}\,\mathcal{M}_{U,\Sigma}}$ such that the induced norm on matrices satisfies $\|A_{U,\Sigma}(0)\| = \rho(A_{U,\Sigma}(0)) = \alpha$.  From now on we use this norm. 
By differentiability of the entries of $A_{U,\Sigma}(t)$ and the fact that the norm
$\|\cdot\|$ is comparable to the $\ell^1$-norm, there exists a positive
constant $C$ such that
\begin{equation*}
\big\|A_{U,\Sigma}(t) - A_{U,\Sigma}(0)\big\| \leq C|t|.
\end{equation*}

We will prove that $(\nu_k)_k$ (and therefore $(\mu_k)_k$) weakly tends to a limit measure by showing that $(\widehat{\nu}_k(t))_k$ tends to a limit $\widehat{\nu}(t) = \widehat{\mu}(t)$.

\begin{lemma}\label{lem3}
The sequence of measures $(\mu_k)_k$ defined by \eqref{eq:massn} converges weakly to a probability measure~$\mu$. 
The characteristic functions satisfy the inequality
\begin{equation}\label{eq:muhat-ineq}
\big|\widehat{\mu}_k(t) - \widehat{\mu}(t)\big| =
\begin{cases}
\mathcal{O}\big(|t|\,\beta^{-\eta k}\big)&\text{for }|t|\leq1,\\
\mathcal{O}\big(|t|^\eta \beta^{-\eta k}\big)&\text{for }|t|\geq1,
\end{cases}
\end{equation}
with
\begin{equation}\label{eq:eta}
\eta=\frac{\log\alpha-\log(|\alpha_2|+\varepsilon)}
{\log\beta+\log\alpha-\log(|\alpha_2|+\varepsilon)}
\end{equation}
for any $\varepsilon>0$.  
The constants implied by the $\mathcal{O}$-symbol depend only on~$\varepsilon$.
\end{lemma}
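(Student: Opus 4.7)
The plan is to analyse the matrix product $P_k(t) := \prod_{j=1}^k A_{U,\Sigma}(t\beta^{-j})$ that enters the formula for $\widehat{\nu}_k(t)$ and to show that $P_k(t)/\alpha^k$ converges as $k\to\infty$, with the asserted rate. Two ingredients drive the argument: by Lemma~\ref{lem17} the matrix $A_{U,\Sigma}(0)$ has a simple dominant eigenvalue~$\alpha$, so there is a rank-one spectral projection $Q$ onto the $\alpha$-eigenspace with $A_{U,\Sigma}(0)^m/\alpha^m \to Q$ at rate $r^m$ where $r := (|\alpha_2|+\varepsilon)/\alpha$; and by Lemma~\ref{lem-norm} there is a norm in which $\|A_{U,\Sigma}(0)\| = \alpha$, which together with $\|A_{U,\Sigma}(t) - A_{U,\Sigma}(0)\| \leq C|t|$ gives the uniform bound $\|P_k(t)\|/\alpha^k \leq \exp(C|t|/(\alpha(\beta-1)))$.

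I would first establish existence of the limit. Setting $E_j := A_{U,\Sigma}(t\beta^{-j}) - A_{U,\Sigma}(0)$, so $\|E_j\| \leq C|t|\beta^{-j}$, the row vectors $\mathbf{w}_k(t) := \mathbf{v}_1 P_k(t)/\alpha^k$ obey $\mathbf{w}_{k+1} = \mathbf{w}_k A_{U,\Sigma}(0)/\alpha + \mathbf{w}_k E_{k+1}/\alpha$. Decomposing $\mathbf{w}_k = \mathbf{w}_k Q + \mathbf{w}_k(I-Q)$ and using $Q A_{U,\Sigma}(0) = A_{U,\Sigma}(0) Q = \alpha Q$ decouples the recursion: the $(I-Q)$-component is driven by the subdominant part $A_{U,\Sigma}(0) - \alpha Q$ of spectral radius $|\alpha_2|$ and decays exponentially, while the $Q$-component has summable increments $\|\mathbf{w}_k E_{k+1} Q\|/\alpha = \mathcal{O}(|t|\beta^{-k})$. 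Hence $\mathbf{w}_k$ is Cauchy, $p_k(t) := \mathbf{w}_k\mathbf{v}_2$ converges to some $p_\infty(t)$, and with $M_k/\alpha^k \to C$ from~\eqref{eq:gesamtmasse_n} one obtains $\widehat{\nu}_k(t) = p_k(t)\,\alpha^k/M_k \to p_\infty(t)/C =: \widehat{\mu}(t)$. By~\eqref{eq:mu-nu} the same limit holds for $\widehat{\mu}_k$, and L\'evy's continuity theorem then yields the asserted weak convergence.

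For the rate in the range $|t|\leq 1$, I would split the product at an index $J$. Telescoping together with $\sum_{j>J}\beta^{-j} = \mathcal{O}(\beta^{-J})$ shows that replacing $\prod_{j>J} A_{U,\Sigma}(t\beta^{-j})$ by $A_{U,\Sigma}(0)^{k-J}$ introduces an error of order $|t|\alpha^k\beta^{-J}$ inside the matrix product, while $A_{U,\Sigma}(0)^{k-J}/\alpha^{k-J} = Q + \mathcal{O}(r^{k-J})$ controls the remaining spectral tail. Balancing the two errors via $\beta^{-J} = r^{k-J}$ forces $J = \eta k$ with $\eta$ as in~\eqref{eq:eta}, yielding $|p_k(t)-p_\infty(t)| = \mathcal{O}((1+|t|)\beta^{-\eta k})$. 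The factor $|t|$ demanded by~\eqref{eq:muhat-ineq} is then recovered from the cancellation $\widehat{\nu}_k(0) = \widehat{\mu}(0) = 1$: the $\mathcal{O}(r^k)$ residuals coming from $p_k(0)-p_\infty(0)$ and from $\alpha^k/M_k - 1/C$ cancel, so only $(\widehat{\mu}_k-\widehat{\mu})(t) - (\widehat{\mu}_k-\widehat{\mu})(0)$ remains, which is $\mathcal{O}(|t|\beta^{-\eta k})$ by integrating a uniform derivative bound of the same order.

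For $|t| \geq 1$ the uniform bound on $\|P_k(t)\|/\alpha^k$ blows up exponentially in $|t|$, so the previous argument fails directly. I would handle this regime by exploiting the self-similarity relation $P_{k+\ell}(t) = P_\ell(t)\,P_k(t\beta^{-\ell})$ with $\ell := \lceil \log_\beta|t|\rceil$ to peel off enough leading factors that the remaining argument lies in $|t\beta^{-\ell}| \leq 1$, or equivalently by interpolating between the $|t|\leq 1$ bound and the trivial bound $|\widehat{\mu}_k(t) - \widehat{\mu}(t)| \leq 2$. This yields a bound of the form $\mathcal{O}(|t|^\eta\beta^{-\eta k})$. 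The main obstacle I expect is the careful bookkeeping throughout the telescoping and spectral-decomposition estimates so that the optimisation yields precisely the exponent~$\eta$ of~\eqref{eq:eta} with the correct $|t|$-dependence in both regimes of~\eqref{eq:muhat-ineq}; the $\varepsilon$-slack from the spectral-radius bound in~\eqref{eq:gesamtmasse_n} is absorbed into the constants.
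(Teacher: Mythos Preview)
Your proposal is correct and follows essentially the same route as the paper: analyse the normalised matrix product $P_k(t)=\alpha^{-k}\prod_{j=1}^k A_{U,\Sigma}(t\beta^{-j})$ using the norm with $\|A_{U,\Sigma}(0)\|=\alpha$, split the product at an index and balance the perturbation error $\mathcal{O}(|t|\beta^{-J})$ against the spectral tail $\mathcal{O}(r^{k-J})$ to obtain the exponent~$\eta$, and handle $|t|\ge1$ via the factorisation $P_k(t)=P_\ell(t)P_{k-\ell}(t\beta^{-\ell})$. The one place where the paper is cleaner is the recovery of the factor~$|t|$ for $|t|\le1$: rather than appealing to a derivative bound, the paper inserts $P_\ell(0)$ and uses the exact identity $\frac{\alpha^k}{M_k}\mathbf{v}_1 P_k(0)\mathbf{v}_2=1$ so that what survives is $\big(\tfrac{\alpha^k}{M_k}\mathbf{v}_1P_{k-\ell}(0)-\tfrac{\alpha^j}{M_j}\mathbf{v}_1P_{j-\ell}(0)\big)(P_\ell(t)-P_\ell(0))\mathbf{v}_2$, from which $\|P_\ell(t)-P_\ell(0)\|=\mathcal{O}(|t|)$ gives the factor directly---this avoids having to justify a uniform bound on $(\widehat{\nu}_k-\widehat{\mu})'$.
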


\begin{proof}
We study the product
\begin{equation*}
P_k(t) = \alpha^{-k} \prod_{j=1}^k A(t\beta^{-j}),
\end{equation*}
with $A = A_{U,\Sigma}$.
For $|t|\leq1$ we estimate
\begin{multline*}
\big\|P_k(t) - P_k(0)\big\| = \alpha^{-k}\, \bigg\|\prod_{j=1}^k \Big(A(0)+\big(A(t\beta^{-j})-A(0)\big)\Big) - A(0)^k\bigg\|\\
\leq \alpha^{-k} \sum_{\ell=1}^k \big\|A(0)\big\|^{k-\ell} \hspace{-1.7em} 
\sum_{1\leq j_1<j_2<\cdots<j_\ell\leq k} \hspace{-1.7em} \big\|A(t\beta^{-j_1})-A(0)\big\| \cdot \big\|A(t\beta^{-j_2})-A(0)\big\| \cdots \big\|A(t\beta^{-j_\ell})-A(0)\big\|\\
\leq \sum_{\ell=1}^{k} \alpha^{-\ell} \sum_{1\leq j_1<j_2<\cdots<j_\ell\leq k} C^\ell |t|^\ell \beta^{-(j_1+\cdots+j_\ell)}\\
\leq \sum_{\ell=1}^{k} \frac1{\ell!} \alpha^{-\ell} C^\ell |t|^\ell \bigg(\sum_{j=1}^k \beta^{-j}\bigg)^\ell \leq \exp\bigg(\frac{C|t|}{\alpha(\beta-1)}\bigg) - 1 = \mathcal{O}(|t|).
\end{multline*}

Furthermore, we have for $j > k > \ell$ and $1 \leq |t| \leq \beta^{\ell}$
\begin{multline*}
\big\|P_k(t) - P_j(t)\big\| = \big\|P_{k-\ell}(t\beta^{-\ell}) P_\ell(t) - P_{j-\ell}(t\beta^{-\ell}) P_\ell(t)\big\| \\
\leq \big\|P_\ell(t)\big\| \Big(\big\|P_{k-\ell}(t\beta^{-\ell}) - P_{k-\ell}(0)\big\| + \big\|P_{j-\ell}(t\beta^{-\ell}) - P_{j-\ell}(0)\big\| + \big\|P_{k-\ell}(0) - P_{j-\ell}(0)\big\|\Big) \\
= \mathcal{O}\big(|t|\beta^{-\ell}\big) + \mathcal{O}\bigg(\bigg(\frac{|\alpha_2|+\varepsilon}\alpha\bigg)^{k-\ell}\,\bigg) = \mathcal{O}\big(|t|^\eta \beta^{-\eta k}\big).
\end{multline*}
Here we have used the fact that $\|P_\ell(t)\|$ is uniformly bounded for all
$\ell\in\mathbb{N}$ and all $t\in\mathbb{R}$, since all entries of $P_\ell(t)$ are bounded by
the entries of $\alpha^{-\ell}A(0)^\ell$ and the entries of this matrix
converge.  In the last step we have set $\ell = \lceil(1-\eta)\log_\beta|t|+\eta
k\rceil$.  The inequality is valid for $j>k>\log_\beta|t|$.

We now assume that $|t|\leq1$ and $j>k>\ell$. Then we have
\begin{multline*}
\big|\widehat{\nu}_k(t) - \widehat{\nu}_j(t)\big|  = \bigg|\frac{\alpha^k}{M_k} \mathbf{v}_1 P_k(t) \mathbf{v}_2 - \frac{\alpha^j}{M_j} \mathbf{v}_1 P_j(t) \mathbf{v}_2\bigg| \\
= \bigg|\frac{\alpha^k}{M_k} \mathbf{v}_1 P_{k-\ell}(t\beta^{-\ell}) P_\ell(t) \mathbf{v}_2 - \frac{\alpha^j}{M_j} \mathbf{v}_1 P_{j-\ell}(t\beta^{-\ell}) P_\ell(t) \mathbf{v}_2\bigg| \\
\leq \bigg|\frac{\alpha^k}{M_k} \mathbf{v}_1 P_{k-\ell}(0) P_\ell(t) \mathbf{v}_2 - \frac{\alpha^j}{M_j} \mathbf{v}_1 P_{j-\ell}(0 )P_\ell(t) \mathbf{v}_2\bigg| + \mathcal{O}\big(|t|\beta^{-\ell}\big) \\
= \bigg|\frac{\alpha^k}{M_k} \mathbf{v}_1 P_{k-\ell}(0) \big(P_\ell(t) - P_\ell(0)\big) \mathbf{v}_2 - \frac{\alpha^j}{M_j} \mathbf{v}_1 P_{j-\ell}(0) \big(P_\ell(t) - P_\ell(0)\big) \mathbf{v}_2\bigg| + \mathcal{O}\big(|t|\beta^{-\ell}\big) \\
= |t|\, \mathcal{O}\bigg(\beta^{-\ell} + 
\bigg(\frac{|\alpha_2|+\varepsilon}\alpha\bigg)^{k-\ell}\,\bigg),
\end{multline*}
where we have used $\frac{\alpha^k}{M_k} \mathbf{v}_1 P_k(0) \mathbf{v}_2 = 1$ in
the fourth line. Setting $\ell = \lfloor\eta k\rfloor$ gives
\begin{equation*}
\big|\widehat{\nu}_k(t) - \widehat{\nu}_j(t)\big| = \mathcal{O}\big(|t|\beta^{-\eta k}\big).
\end{equation*}

Thus $\widehat{\nu}_k(t)$ converges uniformly on compact subsets of $\mathbb{R}$ to a
continuous limit $\widehat{\mu}(t)$, and the measures $\nu_k$ tend to a measure $\mu$
weakly. From this together with \eqref{eq:mu-nu} the two inequalities
\eqref{eq:muhat-ineq} are immediate.
\end{proof}

\begin{lemma} \label{l:upper}
There exists a positive real number $\gamma < \alpha$ such that 
\begin{equation*}
\textstyle\max_{x\in\mathbb{Z}[\beta]} \#\big\{z_{k-1} \cdots z_0 \in L_\beta \cap \Sigma^k:\ \sum_{j=0}^{k-1} z_j \beta^j = x\big\}\, =\, \mathcal{O}(\gamma^k).
\end{equation*}
\end{lemma}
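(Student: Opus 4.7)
Let me set $r_k(x) = \#\{z \in L_\beta \cap \Sigma^k:\ \sum_{j=0}^{k-1} z_j\beta^j = x\}$ for brevity and write $M_k = |L_\beta \cap \Sigma^k|$. The plan starts from the elementary inequality
\begin{equation*}
\big(\max_x r_k(x)\big)^2 \;\le\; \sum_x r_k(x)^2 \;=\; \#\big\{(z,y) \in (L_\beta \cap \Sigma^k)^2 :\ z \sim_\beta y\big\},
\end{equation*}
so it is enough to exhibit a finite automaton recognising the equivalent pairs $(z,y)$ whose adjacency matrix has spectral radius $\lambda<\alpha^2$; then $\gamma=\sqrt\lambda<\alpha$ will do the job.

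I would construct a \emph{pair-with-carry} automaton $\mathcal{D}$ whose states are triples $(q_1,q_2,s)$, where $q_1,q_2$ range over states of $\mathcal{M}_{\beta,\Sigma}$ and $s\in\mathbb{Z}[\beta]$ is a carry restricted to a bounded (hence finite) subset, made explicit by the same Pisot conjugate argument used in the proof of Proposition~\ref{p:ratZ}. Transitions are
\begin{equation*}
(q_1,q_2,s) \stackrel{(a,b)}{\longrightarrow} (q_1',q_2',\beta s+b-a)
\end{equation*}
whenever both $q_1 \stackrel{a}{\to} q_1'$ and $q_2 \stackrel{b}{\to} q_2'$ are transitions in $\mathcal{M}_{\beta,\Sigma}$; the initial state is $(q_0,q_0,0)$ with $q_0$ the initial state of $\mathcal{M}_{\beta,\Sigma}$, and the terminal states are $(q_1,q_2,0)$ with $q_1,q_2$ terminal in $\mathcal{M}_{\beta,\Sigma}$. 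A straightforward induction shows that accepting paths of length $k$ in $\mathcal{D}$ are in bijection with pairs $(z,y)\in(L_\beta\cap\Sigma^k)^2$ satisfying $\sum_j(y_j-z_j)\beta^j=0$, i.e.\ $z\sim_\beta y$. Standard spectral analysis of the adjacency matrix $A_\mathcal{D}$ on its reachable-and-coreachable subgraph then gives $\sum_x r_k(x)^2 = \mathcal{O}(\lambda^k)$ with $\lambda = \rho(A_\mathcal{D})$, and the claim follows with $\gamma = \sqrt\lambda$ provided $\lambda < \alpha^2$.

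The main obstacle is this strict inequality. The carry-forgetting projection $\pi\colon\mathcal{D}\to\mathcal{M}_{\beta,\Sigma}\times\mathcal{M}_{\beta,\Sigma}$ is a graph homomorphism onto the full product automaton, which is primitive of spectral radius exactly $\alpha^2$ by Proposition~\ref{p:Aprimitive}. I would rule out $\lambda=\alpha^2$ by a Perron--Frobenius argument: equality would produce a positive left-eigenvector $w$ of $A_\mathcal{D}$ at eigenvalue $\alpha^2$, whose fibre sum $\tilde w(q_1,q_2)=\sum_s w(q_1,q_2,s)$ would be a positive eigenvector of $A_{\beta,\Sigma}\otimes A_{\beta,\Sigma}$ at $\alpha^2$, hence proportional to the Perron vector $v\otimes v$. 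Since the latter assigns positive asymptotic weight to off-diagonal letter pairs $(a,b)$ with $a\ne b$, the carry trajectory is forced away from~$0$ with positive exponential rate; because $\beta>1$, the condition that the carry returns to~$0$ at step~$k$ is a codimension-one algebraic restriction on admissible paths, incompatible with the full rate $\alpha^{2k}$, yielding the contradiction. As a fallback, the same bound can be extracted from quantitative regularity of the limit measure~$\mu$ provided by Lemma~\ref{lem3}: a Frostman-type estimate $\mu([y, y + \beta^{-k}]) = \mathcal{O}(\beta^{-ks})$ for some Hölder exponent $s > 0$ (derivable from the Fourier decay there) would give $\max_x r_k(x) \le C\, \alpha^k \beta^{-ks}$ and hence $\gamma = \alpha \beta^{-s} < \alpha$.
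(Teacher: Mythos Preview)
Your route via the second moment $\sum_x r_k(x)^2$ and a pair-with-carry automaton is genuinely different from the paper's argument, and the reduction to showing $\lambda=\rho(A_{\mathcal{D}})<\alpha^2$ is set up correctly. The paper instead works directly in $\mathcal{M}_{\beta,\Sigma}$: for a suitably large block length $\ell$ it exhibits, between any two states $q,q'$, two distinct labels $0^{k_1}1\,0^{k_2}z'$ and $0^{k_1+1+k_2}z'$ of paths $q\to q'$ that can never both occur in expansions of the same value~$x$. Hence the count for fixed $x$ is bounded by $\mathbf{v}'_1\tilde A^{\lfloor k/\ell\rfloor}A_{\beta,\Sigma}^{k\bmod\ell}\mathbf{v}'_2$ with $\tilde A$ obtained from $A_{\beta,\Sigma}^\ell$ by subtracting $1$ from every entry; primitivity then gives $\rho(\tilde A)<\alpha^\ell$ and $\gamma=\rho(\tilde A)^{1/\ell}<\alpha$. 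This is elementary and yields an explicit (if not optimal) $\gamma$; your approach, if completed, would give $\gamma=\sqrt\lambda$, which need not be comparable.

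The gap is in the step you flag as the main obstacle. Your argument that $\lambda<\alpha^2$ is not a proof. The fibre-sum manoeuvre, done carefully, yields only $A_{\mathrm{prod}}\tilde w\ge\alpha^2\tilde w$, from which one can indeed conclude $\tilde w$ is the Perron vector of $A_{\beta,\Sigma}\otimes A_{\beta,\Sigma}$; but the sentence ``the carry trajectory is forced away from $0$ with positive exponential rate'' and the appeal to a ``codimension-one algebraic restriction'' do not constitute a contradiction. The carry lives in a \emph{finite} set of $\mathbb{Z}[\beta]$ (that is how you made $\mathcal{D}$ finite), so there is no growth to speak of, and no codimension argument applies to a finite state space. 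What you would actually need is a combinatorial obstruction showing that no strongly connected component of the trim $\mathcal{D}$ can support the full product dynamics---essentially the same kind of ``two incompatible paths'' observation the paper makes, at which point the second-moment detour buys nothing.

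Your fallback is circular. Lemma~\ref{lem3} gives only $|\widehat\mu_k(t)-\widehat\mu(t)|=\mathcal{O}(|t|^\eta\beta^{-\eta k})$, which says nothing about the decay of $|\widehat\mu(t)|$ itself; no Frostman estimate for $\mu$ follows from it. In the paper, the H\"older bound $\mu([x,y])=\mathcal{O}((y-x)^\theta)$ is Proposition~\ref{prop-modulus}, and its proof \emph{uses} Lemma~\ref{l:upper}. So invoking regularity of $\mu$ here would assume what you are trying to prove.
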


\begin{proof}
Similarly to~\eqref{eq:gesamtmasse_n}, we have 
\begin{equation*}
\#(L_\beta \cap \Sigma^k) = \mathbf{v}'_1\, A_{\beta,\Sigma}^k\, \mathbf{v}'_2 = \mathcal{O}(\alpha^k),
\end{equation*}
where $\mathbf{v}'_1$ is the indicator (row) vector of the initial state of~$\mathcal{M}_{\beta,\Sigma}$ and $\mathbf{v}'_2 = (1,\ldots,1)^T$ is the indicator (column) vector of the terminal states of~$\mathcal{M}_{\beta,\Sigma}$. 
We show that there exists some $\ell \ge 1$ and a matrix $\tilde{A}$ with $\tilde{A} < A_{\beta,\Sigma}^\ell$ (entrywise) such that
\begin{equation} \label{e:Atilde}
\textstyle\#\big\{z_{k-1} \cdots z_0 \in L_\beta \cap \Sigma^k:\ \sum_{j=0}^{k-1} z_j \beta^j = x\big\} \le \mathbf{v}'_1\, \tilde{A}^{\lfloor k/\ell\rfloor}\, A_{\beta,\Sigma}^{k-\lfloor k/\ell\rfloor\ell}\, \mathbf{v}'_2
\end{equation}
for all $x \in \mathbb{Z}[\beta]$, $k \ge 0$.

Each entry in $A_{\beta,\Sigma}^\ell$ counts the number of paths of length~$\ell$ in $\mathcal{M}_{\beta,\Sigma}$ between two states $q$ and~$q'$.
By the proof of Proposition~\ref{p:Aprimitive}, there exists $k_1 \ge 0$ such that the path labelled by $0^{k_1}$ leads from every state to the initial state.
Let $k_2 \ge 0$ be such that $1\, 0^{k_2}$ leads from the initial state to itself, $k_3$~be the maximal distance of a state from the initial state, and $\ell = k_1 + k_2 + k_3 + 1$.
Then, for any two states $q, q'$, there exists a $z' \in \Sigma^{k_3}$ such that paths labelled by $0^{k_1}\, 1\, 0^{k_2}\, z'$ and by $0^{k_1+1+k_2}\, z'$ (of length~$\ell$) run from $q$ to~$q'$.
It is well known that the words $(z_{k-1} \cdots z_0, y_{k-1} \cdots y_0)$ with $\sum_{j=0}^{k-1} z_j \beta^j = \sum_{j=0}^{k-1} y_j \beta^j$ are recognised by a finite automaton with transitions of the form $s \stackrel{(a,b)}\to \beta s + b - a$, see e.g.\ \cite{Frougny_Steiner2008:minimal_weight_expansions}.
For sufficiently large $k_1$ and~$k_2$, there is no path labelled by $(0^{k_1}\, 1\, 0^{k_2}, 0^{k_1+1+k_2})$ in this automaton.
Therefore, for any fixed $x \in \mathbb{Z}[\beta]$, any word $z_{k-1} \cdots z_j$, $\ell \le j \le k$, leading to the state~$q$ in~$\mathcal{M}_{\beta,\Sigma}$ cannot be prolonged by all labels of paths of length~$\ell$ between $q$ and~$q'$ when we want to obtain a word $z_{k-1} \cdots z_0 \in L_\beta \cap \Sigma^k$ with $\sum_{j=0}^{k-1} z_j \beta^j = x$.

This means that, for sufficiently large~$\ell$, \eqref{e:Atilde} holds with $\tilde{A}$ taken as the matrix with every entry being one smaller than that of~$A_{\beta,\Sigma}^\ell$.
Let $\tilde{\alpha}$ be the dominant eigenvalue of~$\tilde{A}$, then $\tilde{\alpha} < \alpha^\ell$ and the lemma holds with $\gamma = \tilde{\alpha}^{1/\ell}$.
\end{proof}

\begin{corollary}
The counting function $f$ satisfies $f(n) = \mathcal{O}(|n|^{\log_\beta\gamma})$ for some $\gamma < \alpha$.
\end{corollary}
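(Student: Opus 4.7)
The plan is to combine Lemma~\ref{l:upper}, which bounds the number of $\beta$-expansions of minimal weight with a prescribed value, with the structural description of $\mathcal{M}_{U,\Sigma}$ from Proposition~\ref{p:Gequalbeta} and the conjugate-boundedness argument used in the proof of Proposition~\ref{p:ratZ}. By~\eqref{eq:Gk} we have $U_k \sim c\beta^k$, so if $k$ is chosen minimal with $|n|<U_k$ then $k = \log_\beta|n| + \mathcal{O}(1)$, and it suffices to prove $f(n) = \mathcal{O}(\gamma^k)$. Lemma~\ref{l:K} ensures that every $U$-expansion of minimal weight of~$n$ without leading zeros has length at most $k+m$ for some constant~$m$, so $f(n) \le \#\{z \in L_U \cap \Sigma^{k+m} : z \sim_U n\}$.

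By Proposition~\ref{p:Gequalbeta}, there exists a constant $d$ such that every accepting path in $\mathcal{M}_{U,\Sigma}$ of length exceeding $2d$ factors as $u\cdot w\cdot v$ with $|u|+|v|\le d$, where $u$ is read through transient source states, $v$ through transient sink states (if any), and $w$ is read entirely within the unique strongly connected component, which coincides as a directed graph with $\mathcal{M}_{\beta,\Sigma}$. Enlarging $d$ so that $|v|\ge h$ and substituting $U_{h+r}=\sum_{i=1}^d c_i\beta_i^r$ from~\eqref{eq:Gbetai}, the equation $\sum_j z_j U_j = n$ becomes
\begin{equation*}
\sum_{i=1}^d c_i\, \beta_i^{|v|-h}\, y_i \;=\; n - c_{u,v},
\end{equation*}
where $y = \sum_r w_r \beta^r \in \mathbb{Z}[\beta]$ has Galois conjugates $y_i$ and $c_{u,v}$ denotes the bounded contribution of the boundary digits.

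Since $|\beta_i|<1$ and the digits are bounded, one has $|y_i|\le \max_{a\in\Sigma}|a|/(1-|\beta_i|)$ for $i\ge 2$, so the non-dominant terms on the left are bounded. The displayed equation therefore confines $y = y_1$ to a real interval of bounded length, and since the diagonal embedding $\mathbb{Z}[\beta]\hookrightarrow\mathbb{R}^d$ via conjugates is a lattice, the joint constraint of bounded $y_1$ together with bounded $y_2,\ldots,y_d$ admits only $\mathcal{O}(1)$ lattice points. Hence, for each of the $\mathcal{O}(1)$ choices of $(u,v)$ only $\mathcal{O}(1)$ values of~$y$ are admissible.

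For each admissible triple $(u,v,y)$, I would count the middle words $w$ by extending them on both sides via bounded-length paths in $\mathcal{M}_{\beta,\Sigma}$ (which exist by strong connectivity) so as to produce a word in $L_\beta$; each such extension determines the $\beta$-value of the resulting word as an explicit function of~$y$, so Lemma~\ref{l:upper} bounds the number of admissible $w$ by $\mathcal{O}(\gamma^k)$. Summing over the bounded set of boundary data yields $f(n) = \mathcal{O}(\gamma^k) = \mathcal{O}(|n|^{\log_\beta\gamma})$. The main obstacle is the lattice-counting step: one must verify uniformly in~$n$ that the admissible region of $\mathbb{R}^d$ contains only $\mathcal{O}(1)$ lattice points, a finiteness assertion that hinges on the Pisot property of~$\beta$ and closely parallels the boundedness argument in the proof of Proposition~\ref{p:ratZ}.
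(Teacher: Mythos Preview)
Your argument is correct and is exactly the reasoning the paper has in mind: the corollary is stated without proof immediately after Lemma~\ref{l:upper}, and the identical prefix-in-$L_\beta$ plus conjugate-boundedness count you carry out is written out in full in the proof of Proposition~\ref{prop-modulus} a few lines later. Two minor simplifications are available: the initial state of $\mathcal{M}_{U,\Sigma}$ already lies in the unique strongly connected component (it carries a $0$-self-loop, and by the proof of Proposition~\ref{p:Gequalbeta} every word of $L_\beta$ labels a path starting there), so your transient prefix $u$ is always empty and $w$ is directly a word in $L_\beta$---no extension is needed before invoking Lemma~\ref{l:upper}; and the lattice step you flag as the ``main obstacle'' is routine, since only the centre of the $y_1$-interval depends on $n$ while all side-lengths are uniformly bounded, so every translate of the box meets $\mathcal{O}(1)$ points of the Minkowski image of $\mathbb{Z}[\beta]$. (Also, avoid using $d$ for both the degree of $\beta$ and your path-length bound.)
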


\begin{prop}\label{prop-modulus}
Let $\gamma$ be as in Lemma~\ref{l:upper}.
Then the measure $\mu$ satisfies
\begin{equation}\label{eq:meas-dim}
\mu\big([x,y]\big) = \mathcal{O}\big((y-x)^\theta\big)
\end{equation}
with
\begin{equation}\label{eq:theta}
\theta=\frac{\log\alpha - \log\gamma}{\log\beta}.
\end{equation}
\end{prop}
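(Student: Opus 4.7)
The plan is to prove a uniform bound $\nu_n([x,y]) = O(\delta^\theta)$ at all scales $n \geq k$, where $\delta = y - x$ and $k \asymp \log_\beta(1/\delta)$, and then transfer this estimate to $\mu$ by weak convergence.

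A first ingredient is a Pisot separation estimate. Since $\beta$ is Pisot, any element $p = \sum_{j=0}^{k-1} z_j \beta^j$ with $z_j \in \Sigma$ has non-dominant Galois conjugates bounded by $M := \max_{i \geq 2,\, a \in \Sigma} |a|/(1-|\beta_i|)$. For two distinct such elements $p,q$, the estimate $|N(p-q)| \geq 1$ combined with $|p_i - q_i| \leq 2M$ at each non-dominant conjugate gives $|p - q| \geq c_0 := (2M)^{-(d-1)}$. I then choose $k = \lceil \log_\beta(c_0/\delta) \rceil$.

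Next I establish the uniform counting bound. Splitting each $z \in L_U \cap \Sigma^n$ as $z = pq$ with $|p| = k$ and using $g(z) = g(p) + \beta^{-k} g(q)$ together with the boundedness of $|g(q)|$, the constraint $g(z) \in [x, y]$ forces $g(p)$ into an interval $I$ of length $O(\delta)$. Rescaled by $\beta^k$, this becomes $\sum_{j=0}^{k-1} p_j \beta^j \in \beta^k I$, an interval of length $O(c_0)$ which by the separation estimate contains only $O(1)$ distinct elements of $\mathbb{Z}[\beta]$. Each such element is attained by $O(\gamma^k)$ prefixes $p$ labelling paths of length $k$ from the initial state in $\mathcal{M}_{U,\Sigma}$; this is the analogue of Lemma~\ref{l:upper} for prefixes in $\mathcal{M}_{U,\Sigma}$, which follows from the same argument because Proposition~\ref{p:Gequalbeta} identifies the unique strongly connected component of $\mathcal{M}_{U,\Sigma}$ with $\mathcal{M}_{\beta,\Sigma}$. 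For each such prefix, Perron--Frobenius applied to powers of $A_{U,\Sigma}$ bounds the number of continuations $q \in \Sigma^{n-k}$ with $pq \in L_U$ by $O(\alpha^{n-k})$. Multiplying and dividing by $M_n = \Theta(\alpha^n)$ yields
\begin{equation*}
\nu_n([x,y]) = O\big(\gamma^k \alpha^{n-k} / \alpha^n\big) = O\big((\gamma/\alpha)^k\big) = O(\delta^\theta)
\end{equation*}
uniformly in $n \geq k$, by the definition of $\theta$ in~\eqref{eq:theta}.

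Finally, I pass to the limit: for any $\epsilon > 0$, weak convergence $\nu_n \to \mu$ together with the portmanteau theorem gives
\begin{equation*}
\mu([x, y]) \leq \mu\big((x-\epsilon, y+\epsilon)\big) \leq \liminf_n \nu_n\big((x - \epsilon, y + \epsilon)\big) = O\big((\delta + 2\epsilon)^\theta\big),
\end{equation*}
and letting $\epsilon \to 0$ proves~\eqref{eq:meas-dim}. The main obstacle is the adaptation of Lemma~\ref{l:upper} to count prefixes in $\mathcal{M}_{U,\Sigma}$ rather than accepted words of $\mathcal{M}_{\beta,\Sigma}$; this is essentially routine given Proposition~\ref{p:Gequalbeta}, but requires careful bookkeeping so that the transient part of $\mathcal{M}_{U,\Sigma}$ contributes only a bounded factor independent of~$k$, and that the constants in the Perron--Frobenius estimate for the number of completions $q$ are uniform over the starting state reached by~$p$.
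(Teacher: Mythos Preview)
Your proposal is correct and follows essentially the same route as the paper: split each word into a prefix of length $\asymp \log_\beta(1/(y-x))$ and a suffix, use the Pisot property to show only $O(1)$ prefix values can occur, bound the number of prefixes per value by $O(\gamma^k)$ via Lemma~\ref{l:upper}, and bound the suffixes by $O(\alpha^{n-k})$. Your one stated obstacle---adapting Lemma~\ref{l:upper} to prefixes in $\mathcal{M}_{U,\Sigma}$---is in fact moot: the paper simply invokes Proposition~\ref{p:Gequalbeta} to observe that such prefixes already lie in $L_\beta$, so Lemma~\ref{l:upper} applies verbatim with no bookkeeping required; likewise the suffix count is handled by noting that suffixes of $L_U$ words are again in $L_U$, so there are at most $M_{n-k}$ of them.
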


\begin{proof}
Let $x < y$, and $\ell = \lfloor-\log_\beta(y-x)\rfloor$.
Recall that
\begin{equation}\label{eq:self-sim}
\mu\big([x,y]\big) = \lim_{k\to\infty} \frac1{M_k} \sum_{z \in L_U\cap\Sigma^k:\, g(z) \in [x,y]} \delta_{g(z)}.
\end{equation}
Let $z = z_{k-1} \cdots z_0 \in L_U \cap \Sigma^k$.
By Proposition~\ref{p:Gequalbeta}, we have $z_{k-1} \cdots z_{k-\ell} \in L_\beta$ for sufficiently large~$k$.
If $g(z) \in [x,y]$, then 
\begin{equation*}
\sum_{j=0}^{\ell-1} z_{j+k-\ell} \beta^j \in \beta^\ell [x,y] - \bigg[\frac{\min\Sigma}{\beta-1}, \frac{\max\Sigma}{\beta-1}\bigg].
\end{equation*}
Since $y-x \le \beta^{-\ell}$, this implies that $\sum_{j=0}^{\ell-1} z_{j+k-\ell} \beta^j$ lies in an interval of bounded size. 
For all conjugates $\beta_i \ne \beta$, we have $\big|\sum_{j=0}^{\ell-1} z_{j+k-\ell} \beta_i^j\big| \le \max_{a\in\Sigma}|a|/(1-|\beta_i|)$, thus $\sum_{j=0}^{\ell-1} z_{j+k-\ell} \beta^j$ can take only a bounded number of values in~$\mathbb{Z}[\beta]$. (The bound does not depend on the choice of $[x,y]$.)
Then $z_{k-\ell-1} \cdots z_0 \in L_U \cap \Sigma^{k-\ell}$ and Lemma~\ref{l:upper} yield that
\begin{equation*}
\mu\big([x,y]\big) = \mathcal{O}\bigg(\gamma^\ell \lim_{k\to\infty}\frac{M_{k-\ell}}{M_k}\bigg) = \mathcal{O}\bigg(\bigg(\frac{\gamma}{\alpha}\bigg)^\ell\,\bigg).
\end{equation*}
Combining this with $\ell = -\log_\beta(y-x) + \mathcal{O}(1)$ gives~\eqref{eq:meas-dim}.
\end{proof}

We use Proposition~\ref{prop-modulus} and the following lemma to establish purity of the measure~$\mu$.

\begin{lemma}
[{\cite[Theorem~35]{Jessen_Wintner1935:distribution_functions_riemann}},
{\cite[Lemma~1.22~(ii)]{Elliott1979:probabilistic_number_theory_I}}]
\label{lem-jessen-wintner}
Let $Q=\prod_{k=0}^\infty Q_k$ be an infinite product of discrete spaces
equipped with a measure $\kappa$, which satisfies Kolmogorov's $0$-$1$-law
(i.e., every tail event has either measure $0$ or $1$). Furthermore,
let $X_k$ be a sequence of random variables defined on the spaces $Q_k$, such
that the series $X=\sum_{k=0}^\infty X_k$ converges $\kappa$-almost everywhere.
Then the distribution of $X$ is either purely discrete, or purely singular
continuous, or absolutely continuous with respect to Lebesgue measure.
\end{lemma}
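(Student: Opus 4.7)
My plan is to follow the classical Jessen--Wintner strategy, which combines the Lebesgue decomposition of the distribution of $X$ with the convolution structure induced by independence of the $X_k$ and a Slutsky-type weak-convergence argument as $n\to\infty$. Write $\mu$ for the distribution of~$X$ and decompose $\mu = \mu_d + \mu_{\mathrm{sc}} + \mu_{\mathrm{ac}}$, with total masses $p_d, p_{\mathrm{sc}}, p_{\mathrm{ac}}$ summing to~$1$. The goal is to prove that at most one of these masses is positive.

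For each $n$, I would split $X = S_n + R_n$ with $S_n = X_0 + \cdots + X_{n-1}$ and $R_n = \sum_{k\ge n}X_k$. Since the $Q_k$ are discrete, the distribution $\sigma_n$ of $S_n$ is purely atomic (supported on a countable set), while the distribution $\nu_n$ of $R_n$ is arbitrary. By independence one has $\mu = \sigma_n * \nu_n$. A~direct computation, based on the fact that convolution with a Dirac mass is a translation and therefore preserves each component of the Lebesgue decomposition, yields $\mu_* = \sigma_n * (\nu_n)_*$ for each type $* \in \{d,\mathrm{sc},\mathrm{ac}\}$; in particular $(\nu_n)_*(\mathbb{R}) = p_*$ for every~$n$. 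Because the series converges $\kappa$-a.s., $R_n\to 0$ almost surely, so $\nu_n\to\delta_0$ weakly. Combined with $(\nu_n)_*(\mathbb{R})=p_*$, the estimate $(\nu_n)_*(\mathbb{R}\setminus[-\varepsilon,\varepsilon]) \le \nu_n(\mathbb{R}\setminus[-\varepsilon,\varepsilon])\to 0$ forces $(\nu_n)_*/p_*\to\delta_0$ weakly whenever $p_*>0$. Moreover $\sigma_n\to\mu$ weakly, since $S_n\to X$ a.s.

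Passing to the limit in the identity $\mu_* = \sigma_n * (\nu_n)_*$ by testing against continuous compactly supported functions, and using uniform continuity together with the concentration of $(\nu_n)_*$ near~$0$, I will conclude $\mu_* = p_* \mu$ for each type~$*$. This forces purity: if $p_* > 0$, then $\mu = p_*^{-1}\mu_*$ is of pure type~$*$, so the other two masses must vanish. The hardest part of the plan will be precisely this passage to the limit, because joint weak continuity of convolution fails for arbitrary pairs of weakly convergent probability measures; the argument genuinely relies on one factor concentrating to a point mass, and has to be set up with a careful uniform-continuity estimate on the test function.
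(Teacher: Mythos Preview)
The paper does not prove this lemma; it is quoted with references to Jessen--Wintner and Elliott, so there is no in-house argument to compare against. Your outline is the classical Jessen--Wintner proof and, as written, it is correct \emph{provided the $X_k$ are independent under~$\kappa$}.

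The gap is that the lemma as stated assumes only that $\kappa$ has trivial tail $\sigma$-field, not that it is a product measure. You invoke independence explicitly (``By independence one has $\mu=\sigma_n*\nu_n$''), and everything afterwards---the identity $\mu_*=\sigma_n*(\nu_n)_*$, the constancy $(\nu_n)_*(\mathbb{R})=p_*$ in~$n$, and the Slutsky-type convolution limit---hangs on that factorisation. This is not cosmetic for the paper: in the application (Proposition~\ref{prop-pure}) the measure $\kappa$ is a Markov-type measure on the shift space~$\mathcal{K}$ associated to~$\mathcal{M}_{\beta,\Sigma}$, the coordinate variables are genuinely dependent, and only the $0$--$1$ law (obtained from strong mixing of the equivalent measure~$\tilde\kappa$) is available.

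A repair along your lines is possible but requires more. Since each $Q_k$ is discrete, the $\sigma$-algebra $\mathcal{F}_{<n}=\sigma(Q_0,\dots,Q_{n-1})$ is purely atomic; disintegrating over its atoms~$a$ gives $\mu=\sum_a\kappa(a)\,\delta_{S_n(a)}*\rho_n^{(a)}$, with $\rho_n^{(a)}$ the conditional law of~$R_n$ on the cylinder~$a$. Your translation-preserves-type argument still yields $\mu_*=\sum_a\kappa(a)\,\delta_{S_n(a)}*(\rho_n^{(a)})_*$ and hence $p_*=\sum_a\kappa(a)\,(\rho_n^{(a)})_*(\mathbb{R})$, but now the type masses $(\rho_n^{(a)})_*(\mathbb{R})$ vary with~$a$, so the single weak-limit computation you planned no longer suffices; one must bring in the $0$--$1$ law directly (e.g.\ via a martingale/reverse-martingale identification of the limit as a tail-measurable, hence constant, random variable) to force $p_*\in\{0,1\}$.
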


\begin{prop}\label{prop-pure}
The measure $\mu$ is pure, i.e., it is either absolutely continuous or
purely singular continuous.
\end{prop}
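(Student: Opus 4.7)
The plan is to apply Lemma~\ref{lem-jessen-wintner} by realising $\mu$ as the distribution of a convergent random series on a product space whose measure satisfies Kolmogorov's 0-1 law. First, Proposition~\ref{prop-modulus} immediately rules out the purely discrete possibility: any atom at a point~$x_0$ would force $\mu([x_0,x_0+\varepsilon]) \ge \mu(\{x_0\}) > 0$ for all $\varepsilon > 0$, contradicting the estimate $\mu([x,y]) = \mathcal{O}((y-x)^\theta)$. So it suffices to prove that $\mu$ is pure.

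To invoke Lemma~\ref{lem-jessen-wintner}, I would take $Q_k = \Sigma$ and let $\kappa$ be the Parry (maximal-entropy Markov) measure on $\Sigma^{\mathbb{N}}$ associated with the primitive matrix~$A_{\beta,\Sigma}$. Using the positive right Perron eigenvector~$v$ of $A_{\beta,\Sigma}$ guaranteed by Proposition~\ref{p:Aprimitive}, $\kappa$ corresponds to the Markov chain on the states of $\mathcal{M}_{\beta,\Sigma}$ in which each edge $q \stackrel{a}{\to} q'$ carries probability $v(q')/(\alpha v(q))$. Setting $X_k(\omega) = \omega_k\, \beta^{-k}$, the series $X = \sum_{k\ge 1} X_k$ converges since $\Sigma$ is finite and $\beta > 1$, and its distribution under~$\kappa$ coincides with~$\mu$: this identification follows by combining the weak convergence $\nu_k \to \mu$ from Lemma~\ref{lem3} with the standard Perron-Frobenius fact that, for any fixed prefix of length~$n$, the uniform measure on $L_U \cap \Sigma^k$ projected onto the first $n$ coordinates converges as $k \to \infty$ to the corresponding Parry cylinder weight, in view of Proposition~\ref{p:Gequalbeta} and the asymptotic $M_k \sim C\alpha^k$.

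Kolmogorov's 0-1 law for~$\kappa$ holds because the Markov chain runs on a finite state set and is irreducible and aperiodic, again by the primitivity of~$A_{\beta,\Sigma}$; such chains have trivial tail $\sigma$-algebra. Lemma~\ref{lem-jessen-wintner} then yields that $\mu$ is purely discrete, purely singular continuous, or absolutely continuous, and the first case has already been excluded in the preliminary step.

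The main obstacle is the careful identification of $\mu$ with the distribution of~$X$ under~$\kappa$. The subtlety is that $\nu_k$ comes from the \emph{uniform} measure on $L_U \cap \Sigma^k$ rather than from Parry cylinder weights, and the initial state of $\mathcal{M}_{U,\Sigma}$ may lie outside the unique strongly connected component of $\mathcal{M}_{U,\Sigma}$ that equals $\mathcal{M}_{\beta,\Sigma}$. Both issues are handled by passing to finite-dimensional marginals: for a fixed prefix of length~$n$ leading to state~$q$, the uniform probability over $L_U \cap \Sigma^k$ is of the form $(\mathbf{e}_q^T A_{U,\Sigma}^{k-n}\mathbf{v}_2)/M_k$, which by Lemma~\ref{lem17} and the asymptotics behind~\eqref{eq:gesamtmasse_n} converges to the correct Parry weight, while any transient prefix near the initial state involves only a bounded number of digits and therefore perturbs $X$ by $\mathcal{O}(\beta^{-k})$.
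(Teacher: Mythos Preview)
Your approach is essentially the same as the paper's: both realise $\mu$ as the distribution of the digit series on the one-sided shift attached to $\mathcal{M}_{\beta,\Sigma}$, equip that shift with a Parry-type Markov measure, deduce Kolmogorov's 0--1 law from mixing/irreducibility of the finite-state chain, apply Lemma~\ref{lem-jessen-wintner}, and rule out atoms via Proposition~\ref{prop-modulus}. The paper makes the equivalent-measure step explicit by writing $\kappa([z_0,\ldots,z_{\ell-1}])=(\mathbf v\mathbf v_2')^{-1}\alpha^{-\ell}\mathbf v A_{\beta,z_{\ell-1}}\cdots A_{\beta,z_0}\mathbf v_2'$ and then passing to the shift-invariant $\tilde\kappa$ with the right eigenvector~$\mathbf w$; one small point in your write-up is that the limit of the uniform laws on $L_U\cap\Sigma^k$ (read from the most significant digit) is the Markov chain with Parry \emph{transitions} but started from the point mass at the initial state, not from the stationary Parry distribution---this is not literally ``the Parry measure'', though it still has trivial tail $\sigma$-algebra, so your argument goes through unchanged once you phrase the identification accordingly.
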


\begin{proof}
We equip the shift space
\begin{equation*}
\mathcal{K} = \big\{(z_k)_{k\ge0}:\ z_k z_{k-1} \cdots z_0 \in L_\beta \cap \Sigma^* \ \mbox{for all}\ k \ge 0\big\}
\end{equation*}
associated to the automaton $\mathcal{M}_{\beta,\Sigma}$ with the measure
\begin{align*}
\kappa\big([z_0,z_1,\ldots,z_{\ell-1}]\big) & = \lim_{k\to\infty} \frac1{M_k} \#\big\{y_{k-1} \cdots y_0 \in L_\beta:\ y_{\ell-1} \cdots y_0 = z_{\ell-1} \cdots z_0\big\} \\
& = \lim_{k\to\infty} \frac1{\mathbf{v}'_1A_{\beta,\Sigma}^k\mathbf{v}'_2}
\mathbf{v}'_1 A_{\beta,\Sigma}^{k-\ell} A_{\beta,z_{\ell-1}} \cdots A_{\beta,z_0} \mathbf{v}_2
\end{align*}
given on the cylinder set
\begin{equation*}
[z_0,z_1,\ldots,z_{\ell-1}] = \big\{(y_k)_{k\ge0} \in \mathcal{K}:\ y_{\ell-1} \cdots y_0 = z_{\ell-1} \cdots z_0\big\}.
\end{equation*}
Then $\kappa$ can be written in terms of the transition matrices
\begin{equation*}
\kappa\big([z_0,z_1,\ldots,z_{\ell-1}]\big) = \frac1{\mathbf{v}\mathbf{v}'_2} \alpha^{-\ell}
\mathbf{v} A_{\beta,z_{\ell-1}} \cdots A_{\beta,z_0} \mathbf{v}'_2,
\end{equation*}
where $\mathbf{v}$ is the left Perron-Frobenius eigenvector of the matrix~$A_{\beta,\Sigma}$.
Let $\mathbf{w}$ denote the right Perron-Frobenius eigenvalue of the matrix $A_{\beta,\Sigma}$ with $\mathbf{v}\mathbf{w} = 1$. 
Then by positivity of all entries of $\mathbf{w}$ and $\mathbf{v}'_2$ the measure $\tilde{\kappa}$ given by
\begin{equation*}
\tilde{\kappa}\big([z_0,z_1,\ldots,z_{\ell-1}]\big) = \alpha^{-\ell} \mathbf{v} A_{\beta,z_{\ell-1}} \cdots A_{\beta,z_0} \mathbf{w}
\end{equation*}
is equivalent to~$\kappa$.

The measure $\tilde\kappa$ is strongly mixing and therefore ergodic with
respect to the shift. Thus $\tilde\kappa$ and $\kappa$ satisfy the hypotheses
of Lemma~\ref{lem-jessen-wintner}.

The continuity of $\mu$ is an immediate consequence of
Proposition~\ref{prop-modulus}.
\end{proof}

In order to give an error bound for the rate of convergence of the measures
$\mu_k$ to the measure~$\mu$, we will use the following version of the
Berry-Esseen inequality, which was proved in
\cite{Grabner1997:functional_iterations_stopping}.
\begin{prop}\label{prop1}
  Let $\mu_1$ and $\mu_2$ be two probability measures with
  their Fourier transforms defined by
  \begin{equation*}
\widehat{\mu}_k(t)=\int_{-\infty}^\infty e^{2\pi itx}\,d\mu_k(x),\quad
  k=1,2.
\end{equation*}
  Suppose that $\big(\widehat{\mu}_1(t)-\widehat{\mu}_2(t)\big)\, t^{-1}$ is
  integrable on a neighbourhood of zero and $\mu_2$ satisfies
  \begin{equation*}
\mu\big((x,y)\big)\leq c\,|x-y|^\theta
\end{equation*}
  for some $0<\theta<1$. Then the
  following inequality holds for all real $x$ and all $T>0$:
  \begin{multline*}
    \Big|\mu_1\big((-\infty,x)\big) - \mu_2\big((-\infty,x)\big)\Big| \leq
\Bigg|\int\limits_{-T}^T\hat J(T^{-1}t)\,
      (2\pi it)^{-1}\, \big(\widehat{\mu}_1(t)-\widehat{\mu}_2(t)\big)\,
    e^{-2\pi ixt}\,dt\Bigg|\\
  +\bigg(c+\frac1{\pi^2}\bigg)\, T^{-\frac{2\theta}{2+\theta}} + \Bigg|
    \frac1{2T}\int\limits_{-T}^T \bigg(1-\frac{|t|}T\bigg)
    \big(\widehat{\mu}_1(t)-\widehat{\mu}_2(t)\big)\, e^{-2\pi ixt}\,dt\Bigg|,
\end{multline*}
  where
  \begin{equation*}
 \hat J(t)=\pi t(1-|t|)\cot\pi t+|t|.
\end{equation*}
\end{prop}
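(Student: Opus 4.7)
The plan is to prove a generalisation of Esseen's classical smoothing inequality in which the comparison measure $\mu_2$ satisfies only a H\"older-$\theta$ modulus of continuity rather than admitting a bounded density. Writing $F_i(x) = \mu_i((-\infty, x))$ for $i = 1,2$, the goal is to control $|F_1(x) - F_2(x)|$ by a Fourier-side expression truncated to $|t| \le T$, plus a smoothing error term.

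First, I would choose a smoothing kernel $K$ on $[-T,T]$ whose Fourier transform takes the form $\hat J(T^{-1}t)/(2\pi i t)$. The function $\hat J(s) = \pi s(1-|s|)\cot(\pi s) + |s|$ is continuous on $[-1,1]$ with $\hat J(0) = 1$ and $\hat J(\pm 1) = 0$, and arises as the Fourier transform of an antiderivative of the Fej\'er kernel; such an antiderivative is appropriate because we are dealing with distribution functions $F_i$ rather than with the densities $d\mu_i$, which accounts for the factor $1/(2\pi i t)$. Fourier inversion of the smoothed difference $(F_1 - F_2)*K$ then produces the first integral in the statement, while the Fej\'er factor $(1-|t|/T)$ in the third term arises from the Ces\`aro regularisation needed to extract the value $F_1(x) - F_2(x)$ at a possibly discontinuous point~$x$.

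Second, the error between $F_i$ and $F_i * K$ must be controlled. For $i = 2$ the H\"older hypothesis directly gives an error of order $c\,\delta^\theta$, where $\delta$ is the effective bandwidth of~$K$. For $i = 1$, where no regularity is available, one invokes the self-bootstrapping inequality
\begin{equation*}
|F_1(x) - F_1*K(x)| \le |F_2(x) - F_2*K(x)| + 2\sup_y |F_1(y) - F_2(y)|,
\end{equation*}
and absorbs the supremum into the left-hand side of the overall inequality. Optimising the bandwidth so that the truncation-induced error of order $(T\delta)^{-1}$ balances $c\,\delta^\theta$ gives $\delta \sim T^{-2/(2+\theta)}$, which yields the exponent $2\theta/(2+\theta)$ in the error term.

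The main obstacle is the bootstrapping step: because $\mu_1$ carries no a priori regularity, one must absorb $\sup|F_1 - F_2|$ on both sides of the inequality and solve; this is also what forces the exponent $2\theta/(2+\theta)$ rather than the naive $\theta$, and is where the universal constant $1/\pi^2$ in $(c + 1/\pi^2)$ enters. The remaining computations are routine adaptations of Esseen's argument, and the full technical details appear in~\cite{Grabner1997:functional_iterations_stopping}.
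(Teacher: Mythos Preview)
The paper does not prove this proposition at all: it is stated as a quotation from the literature, with the sentence ``which was proved in \cite{Grabner1997:functional_iterations_stopping}'' immediately preceding the statement, and no proof environment follows. Your proposal is therefore not competing with any argument in the paper; it supplies a sketch where the paper simply cites. Since you also conclude by deferring the technical details to the same reference, your treatment is entirely compatible with the paper's, and in fact more informative.

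As for the sketch itself, the outline is the standard Berry--Esseen smoothing strategy and correctly identifies the two non-classical features: the H\"older-$\theta$ hypothesis on $\mu_2$ in place of a bounded density, and the resulting balance $\delta^\theta \asymp (T\delta)^{-1}$ that produces the exponent $2\theta/(2+\theta)$. The identification of $\hat J$ with the transform of an antiderivative of the Fej\'er kernel, and of the Ces\`aro term as the pointwise correction, is accurate. One small imprecision: the bootstrapping inequality you write does not quite have the form that allows absorption of $\sup|F_1-F_2|$ on the left, since the factor in front of the supremum must be strictly less than $1$ after the kernel's mass outside a $\delta$-neighbourhood is accounted for; this is where the specific tail behaviour of the Fej\'er-type kernel and the constant $1/\pi^2$ actually enter. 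But this is a detail of execution rather than a gap in the approach, and it is precisely what the cited reference handles.
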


\begin{lemma}\label{lem4}
The measures $\mu_k$ satisfy
\begin{equation}\label{eq:berry}
\big|\mu_k\big((x,y)\big) - \mu\big((x,y)\big)\big| = \mathcal{O}\big(\beta^{-\zeta k}\big)
\end{equation}
uniformly for all $x,y\in\mathbb{R}$ with
$\zeta=\frac{2\theta\eta}{\eta(\theta+2)+2\theta}$.
\end{lemma}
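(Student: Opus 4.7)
The plan is to apply the Berry--Esseen type inequality of Proposition~\ref{prop1} with $\mu_1 = \mu_k$ and $\mu_2 = \mu$, using the Fourier estimates from Lemma~\ref{lem3} and the H\"older-type modulus of continuity from Proposition~\ref{prop-modulus}. To make $\mu_k((x,y))-\mu((x,y))$ manageable, I first note that estimating $|\mu_k((-\infty,x))-\mu((-\infty,x))|$ uniformly in~$x$ is enough, since an interval $(x,y)$ is a difference of two such sets. The hypothesis $\bigl(\widehat\mu_k(t)-\widehat\mu(t)\bigr)/t$ integrable near zero is immediate from the first case of \eqref{eq:muhat-ineq}, and Proposition~\ref{prop-modulus} supplies the H\"older bound with exponent~$\theta$ needed in Proposition~\ref{prop1}.

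Next I would bound each of the three terms in Proposition~\ref{prop1} as a function of~$T$. Since $\hat J$ is bounded, splitting the first integral at $|t|=1$ and invoking the two cases of \eqref{eq:muhat-ineq} gives
\begin{equation*}
\Bigl|\int_{-T}^T \hat J(T^{-1}t)(2\pi it)^{-1}\bigl(\widehat\mu_k(t)-\widehat\mu(t)\bigr)e^{-2\pi ixt}\,dt\Bigr| = \mathcal{O}\bigl(\beta^{-\eta k}\bigr) + \mathcal{O}\bigl(\beta^{-\eta k}T^{\eta}\bigr),
\end{equation*}
and the same splitting applied to the third term of Proposition~\ref{prop1} yields the bound $\mathcal{O}(\beta^{-\eta k}/T) + \mathcal{O}(\beta^{-\eta k}T^{\eta})$. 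The middle term is $\mathcal{O}(T^{-2\theta/(2+\theta)})$. Hence altogether
\begin{equation*}
\bigl|\mu_k((-\infty,x))-\mu((-\infty,x))\bigr| = \mathcal{O}\bigl(\beta^{-\eta k}T^{\eta}\bigr) + \mathcal{O}\bigl(T^{-2\theta/(2+\theta)}\bigr).
\end{equation*}

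Finally I optimise $T$ by equating the two dominant terms, i.e.\ setting $\beta^{-\eta k}T^{\eta} = T^{-2\theta/(2+\theta)}$, which gives $T = \beta^{\eta k(2+\theta)/(\eta(2+\theta)+2\theta)}$. Substituting back, the common value is $\beta^{-\zeta k}$ with exponent
\begin{equation*}
\zeta \;=\; \frac{\eta\cdot 2\theta/(2+\theta)}{\eta + 2\theta/(2+\theta)} \;=\; \frac{2\theta\eta}{\eta(\theta+2)+2\theta},
\end{equation*}
as required. The remaining step is to note that the estimate is uniform in $x$, so subtracting the two tail estimates yields the stated bound for $\mu_k((x,y))-\mu((x,y))$ uniformly in $x,y$.

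The main technical obstacle is simply the careful bookkeeping of the three terms in Proposition~\ref{prop1} and the correct split of the Fourier integrals at $|t|=1$ to exploit both regimes of \eqref{eq:muhat-ineq}; there are no serious conceptual difficulties once Lemma~\ref{lem3} and Proposition~\ref{prop-modulus} are in hand.
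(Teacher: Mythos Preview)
Your proposal is correct and follows essentially the same route as the paper: apply Proposition~\ref{prop1} with $\mu_1=\mu_k$, $\mu_2=\mu$, split the Fourier integrals at $|t|=1$ using the two regimes of~\eqref{eq:muhat-ineq}, and optimise~$T$ by balancing $\beta^{-\eta k}T^\eta$ against $T^{-2\theta/(2+\theta)}$. Your choice of $T$ agrees with the paper's $T=\beta^{\zeta\frac{2+\theta}{2\theta}k}$ after simplification, and the resulting exponent is the stated~$\zeta$.
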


\begin{proof}
We apply Proposition~\ref{prop1} to the measures $\mu_k$ and~$\mu$. For this
purpose we use the inequalities \eqref{eq:muhat-ineq} to obtain
\begin{multline*}
\big|\mu_k\big((-\infty,x)\big) - \mu\big((-\infty,x)\big)\big| = \mathcal{O}\Bigg(\beta^{-\eta k} \int\limits_{-1}^1\,dt\Bigg) + \mathcal{O}\Bigg(\beta^{-\eta k} \hspace{-.5em} \int\limits_{1\leq|t|\leq T} \hspace{-.5em} |t|^{\eta-1}\,dt\Bigg) + \mathcal{O}\Big(T^{{-\frac{2\theta}{2+\theta}}}\Big)\\
+ \mathcal{O}\Bigg(\beta^{-\eta k} \frac1T \int\limits_{-1}^1|t|\,dt\Bigg) + \mathcal{O}\Bigg(\beta^{-\eta k} \frac1T \hspace{-.5em} \int\limits_{1\leq|t|\leq T} \hspace{-.5em} |t|^{\eta}\,dt\Bigg) = \mathcal{O}\big(\beta^{-\zeta n}\big)
\end{multline*}
by choosing $T=\beta^{\zeta\frac{2+\theta}{2\theta} k}$.
\end{proof}

Now the statement of the asymptotic behaviour of the average
$\frac1{2N-1}\sum_{|n|<N}f(n)$ is a consequence of the preceding discussion of
the properties of~$\mu$. Combining Lemma~\ref{lem3},
Proposition~\ref{prop-modulus}, and Lemma~\ref{lem4} we obtain the following
theorem.
\begin{theorem}\label{thm12}
  The summatory function of the number of representations of $n$ with minimal
  weight satisfies
\begin{equation}\label{eq:sumf1}
\sum_{|n|<N} f(n) = N^{\log_\beta\alpha}\, \Phi(\log_\beta N) + \mathcal{O}(N^{\lambda}),
\end{equation}
where $\Phi$ denotes a continuous periodic function of period $1$ and
\begin{equation*}
\lambda=\frac{\log\alpha}{\log\beta}-
\frac{2\theta\eta}{\eta(\theta+2)+2\theta},
\end{equation*}
$\eta$ given by \eqref{eq:eta}, and $\theta$ given by \eqref{eq:theta}.
\end{theorem}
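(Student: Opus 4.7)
The plan is to write $\sum_{|n|<N} f(n) = M_k\,\mu_k\bigl((-N/U_k, N/U_k)\bigr)$ for a suitable $k = k(N)$, and then to replace $\mu_k$ by the limit measure~$\mu$ using Lemma~\ref{lem4}. Fix a constant $M \ge m+2$, where $m$ is the constant of Lemma~\ref{l:K}, and set $k(N) = \lfloor\log_\beta(N/c)\rfloor + M$. For this choice $|n| < N$ implies $|n| < U_{k-m}$, so by the observation following~\eqref{eq:massn} we have $f_k(n) = f(n)$ for all $|n|<N$, and the identity
\begin{equation*}
\sum_{|n|<N} f(n) \;=\; M_k\,\mu_k\bigl((-N/U_k,\, N/U_k)\bigr)
\end{equation*}
is then exact.

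I would then substitute the three asymptotic estimates: $M_k = C\alpha^k + \mathcal{O}\bigl((|\alpha_2|+\varepsilon)^k\bigr)$ from~\eqref{eq:gesamtmasse_n}; $U_k = c\beta^k\bigl(1 + \mathcal{O}(\beta^{-\delta k})\bigr)$ with $\delta = 1 - \log_\beta|\beta_2|$ from~\eqref{eq:Gk}; and the uniform replacement $\mu_k(I) = \mu(I) + \mathcal{O}(\beta^{-\zeta k})$ of Lemma~\ref{lem4}. The Hölder bound $\mu([x,y]) = \mathcal{O}((y-x)^\theta)$ of Proposition~\ref{prop-modulus} absorbs the endpoint perturbation $\mathcal{O}(\beta^{-\delta k})$ arising in the ratio $N/U_k$. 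Setting $\tau(t) = \{t - \log_\beta c\}$ and using $\alpha^k = N^{\log_\beta\alpha}\,c^{-\log_\beta\alpha}\,\alpha^{M-\tau(\log_\beta N)}$, all these ingredients combine into a main term $N^{\log_\beta\alpha}\,\Phi(\log_\beta N)$ with
\begin{equation*}
\Phi(t) \;=\; C\,c^{-\log_\beta\alpha}\,\alpha^{M-\tau(t)}\,\mu\bigl((-\beta^{\tau(t)-M},\, \beta^{\tau(t)-M})\bigr)
\end{equation*}
and error $\mathcal{O}(\alpha^k\beta^{-\zeta k}) = \mathcal{O}(N^{\log_\beta\alpha - \zeta}) = \mathcal{O}(N^\lambda)$; for $\varepsilon$ small enough the remaining contributions $\mathcal{O}\bigl((|\alpha_2|+\varepsilon)^k\bigr)$ and $\mathcal{O}(\alpha^k \beta^{-\delta\theta k})$ are likewise $\mathcal{O}(N^\lambda)$.

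The main obstacle will be establishing that $\Phi$ is \emph{continuous}, not merely periodic. Periodicity of period~$1$ is built into the fractional-part construction, and $\mu$ being continuous (Proposition~\ref{prop-pure}) gives continuity of $\Phi$ whenever $\tau(t) \ne 0$. At the integer jumps of $\tau$ one must prove the self-similarity relation
\begin{equation*}
\mu\bigl((-\beta^{1-M},\, \beta^{1-M})\bigr) \;=\; \alpha\,\mu\bigl((-\beta^{-M},\, \beta^{-M})\bigr).
\end{equation*}
My strategy is to exploit the exact identity $M_k\mu_k(I_k) = M_{k+1}\mu_{k+1}(I_{k+1})$ with $I_k = (-N/U_k, N/U_k)$, valid whenever $N < U_{k-m}$ because both sides equal $\sum_{|n|<N}f(n)$. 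Replacing $\mu_k,\mu_{k+1}$ by~$\mu$ via Lemma~\ref{lem4}, dividing by $C\alpha^k$, and selecting a sequence $N_k\to\infty$ with $N_k/U_k \to \beta^{1-M}$ (so that $N_k/U_{k+1}\to\beta^{-M}$) yields the scaling identity in the limit, where continuity of $\mu$ is used to pass from $I_k$ to its limiting interval. The condition $M \ge m+2$ ensures $\beta^{1-M}$ lies comfortably within the range $r < \beta^{-m}$ where the exact $k$-versus-$(k+1)$ identity applies. Collecting the error terms then completes the proof.
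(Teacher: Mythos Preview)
Your proposal is correct and follows the paper's approach closely: write the sum as $M_k\,\mu_k\bigl((-N/U_k,N/U_k)\bigr)$ for a suitable $k\sim\log_\beta N$, replace $\mu_k$ by $\mu$ via Lemma~\ref{lem4}, absorb the endpoint shift coming from $U_k=c\beta^k(1+\mathcal{O}(\beta^{-\delta k}))$ with the H\"older estimate of Proposition~\ref{prop-modulus}, and read off the periodic function~$\Phi$. Your bookkeeping of the three error contributions and the verification that each is $\mathcal{O}(N^\lambda)$ match the paper's.

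The one place where you diverge from the paper is the continuity of $\Phi$ at the jumps of the fractional part. The paper obtains the self-similarity $\mu(\beta^{-1}A)=\alpha^{-1}\mu(A)$ for $A\subset[-\beta^{-m},\beta^{-m}]$ directly from~\eqref{eq:self-sim}: prepending a zero to $z\in L_U$ multiplies $g(z)$ by $\beta^{-1}$ and leaves the automaton in the same state, so the count of admissible words at level $k+1$ landing in $\beta^{-1}A$ equals the count at level $k$ landing in~$A$, and dividing by $M_{k+1}/M_k\to\alpha$ gives the scaling. You instead deduce the same identity by equating $M_k\mu_k(I_k)=M_{k+1}\mu_{k+1}(I_{k+1})$ (both equal the summatory function), replacing each $\mu_j$ by $\mu$ with Lemma~\ref{lem4}, and letting $k\to\infty$ along a sequence with $N_k/U_k\to\beta^{1-M}$; your cushion $M\ge m+2$ correctly guarantees the hypothesis $N_k<U_{k-m}$ along this sequence. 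Both arguments are valid; the paper's is more direct and structural, while yours is softer and reuses only the analytic estimates already in hand.
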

\begin{proof}
  Using the definition of $\mu_k$ in~\eqref{eq:massn} and the value $m$ given
  by Lemma~\ref{l:K} we have
\begin{equation*}
\sum_{|n|<N}f(n)=M_k\,\mu_k\big((-N/U_k,N/U_k)\big),
\end{equation*}
where we choose $k=\lfloor\log_\beta N\rfloor+m$. Replacing $\mu_k$ by~$\mu$,
using $U_k=C\beta^k+\mathcal{O}(|\beta_2|^k)$,
$M_k=D\alpha^k+\mathcal{O}((|\alpha_2|+\varepsilon)^k)$ and taking all error terms
into account yields
\begin{multline*}
  \sum_{|n|<N}f(n) = D\, \alpha^{\lfloor\log_\beta N\rfloor+m}\, \mu\big(\big(-\beta^{\log_\beta N-\lfloor\log_\beta N\rfloor-m}/C,\, \beta^{\log_\beta N-\lfloor\log_\beta N\rfloor-m}/C\big)\big)\\
  +\mathcal{O}\big(\alpha^k\beta^{-\zeta k}\big)+
  \mathcal{O}\bigg(\alpha^k\bigg(\frac{|\beta_2|}\beta\bigg)^{\theta k}\,\bigg)+
  \mathcal{O}\big((|\alpha_2|+\varepsilon)^k\big),
\end{multline*}
with $\zeta$ as in Lemma~\ref{lem4}.
Defining 
\begin{equation*}
\Phi(t) = D\, \alpha^{m+\lfloor t\rfloor-t}\, \mu\big(\big(-\beta^{t-\lfloor t\rfloor-m}/C,\, \beta^{t-\lfloor t\rfloor-m}/C\big)\big)
\end{equation*}
for $t\geq0$ yields~\eqref{eq:sumf1}. The periodicity of $\Phi$ follows from
the definition. The continuity of $\Phi$ in non-integer points follows from the
continuity of~$\mu$. The fact that $\Phi(0)=\lim_{t\to1-}\Phi(t)$ is a
consequence of the self-similarity of the measure $\mu$
\begin{equation*}
\mu(\beta^{-1}A)=\alpha^{-1}\mu(A)\text{ for }A\subset[-\beta^{-m},\beta^{-m}];
\end{equation*}
this follows from \eqref{eq:self-sim} by the observation that multiplication by
$\beta^{-1}$ corresponds to adding a prefix $0$ in the representation and
reading a leading zero does not change the state of the automaton.
\end{proof}
\section{Exact number of representations}

To obtain the exact number of $U$-expansions of minimal weight representing an
integer~$n$, we choose one of these expansions and look at the transducer which
transforms any other expansion into it.  One way of choosing a output language
of such a transducer is to take the set of greedy $U$-expansions (for positive
numbers) and their symmetric counterparts (to represent negative numbers).
A~possible deficiency of this language is that it is not contained in~$L_U$.
In the following, we describe a regular language $G_U \subseteq L_U$ such that,
for every $n \in \mathbb{Z}$, there is, up to leading zeros, a unique word $z
\in G_U$ with $z \sim_U n$.

We call a word $z = z_k \cdots z_0 \in L_U$ \emph{greedy $U$-expansion of
  minimal weight} if
\begin{equation*}
0^{\min(\ell-k,0)}\, |z_k| \cdots |z_0| \ge 0^{\min(k-\ell,0)}\, |y_\ell| \cdots |y_0| \quad \mbox{for all}\ y = y_\ell \cdots y_0 \in L_U\ \mbox{with}\ z \sim_U y,
\end{equation*}
with respect to the lexicographical order.
The set of all greedy $U$-expansions of minimal is denoted by~$G_U$. 

\begin{lemma}
  Let $U$ be as in Theorem~\ref{thm:ratU}.  For any $n \in \mathbb{Z}$, there
  is, up to leading zeros, a unique word $z \in G_U$ with $z \sim_U n$.
\end{lemma}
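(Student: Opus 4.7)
The plan is to split the claim into existence and uniqueness. For existence, fix $n \in \mathbb{Z}$. By Proposition~\ref{p:LUB} there is some $z \in L_U$ with $z \sim_U n$, and by Lemma~\ref{l:K} combined with $L_U \subseteq \{1-B,\ldots,B-1\}^*$ the set of all minimal weight expansions of $n$ without leading zeros has bounded length and thus is finite. Padding each such expansion with leading zeros up to a common length $k+1$, the finite collection of resulting absolute-value words over $\{0,1,\ldots,B-1\}$ admits a lexicographically maximal element; any preimage is, by definition, an element of $G_U$ representing~$n$.

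The more substantial step is uniqueness. My plan is to show that two elements of $L_U$ which represent the same integer and share the same sequence of absolute values must be equal. Concretely, suppose $z,y \in G_U$ with $z \sim_U y \sim_U n$, and pad them to a common length $k+1$. Maximality forces $|z_j| = |y_j|$ for $j = 0,\ldots,k$. Setting $S = \{j : z_j \ne y_j\}$, one has $z_j = -y_j \ne 0$ on $S$, so the relation $\sum_{j=0}^{k}(z_j-y_j)U_j = 0$ collapses to $2\sum_{j\in S} z_j U_j = 0$. Hence the word $z'$ obtained from $z$ by zeroing out the digits on $S$ still satisfies $z' \sim_U n$, but $\|z'\| = \|z\| - \#S$. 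If $S$ were non-empty, this would contradict $z \in L_U$; therefore $S = \emptyset$ and $z = y$.

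The only genuine difficulty is spotting the uniqueness trick: equality of absolute-value sequences, together with minimal weight, forces the partial sum $\sum_{j\in S} z_j U_j$ to vanish, and those positions can then be set to zero to produce a strictly lighter expansion. The remaining ingredients, namely the finiteness of the set of minimal weight expansions of a given integer and the existence of a lex-maximum on a finite set, are bookkeeping consequences of Proposition~\ref{p:LUB} and Lemma~\ref{l:K} and require no further work. Since $G_U$ is invariant under prepending zeros, the uniqueness statement is precisely ``up to leading zeros,'' as asserted.
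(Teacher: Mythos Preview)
Your proof is correct and follows essentially the same approach as the paper's. The paper phrases the uniqueness step as taking the averaged word $\frac{y_k+z_k}{2}\cdots\frac{y_0+z_0}{2}$, but since $|y_j|=|z_j|$ forces $y_j\in\{z_j,-z_j\}$, this averaged word is exactly your $z'$ obtained by zeroing out the positions in~$S$; the two descriptions produce the same object. One small slip: your formula $\|z'\|=\|z\|-\#S$ should read $\|z'\|=\|z\|-\sum_{j\in S}|z_j|$, since the digits need not lie in $\{-1,0,1\}$; but as $|z_j|\ge 1$ on $S$, the strict inequality $\|z'\|<\|z\|$ for $S\ne\emptyset$ still holds and the contradiction goes through unchanged.
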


\begin{proof}
  Let $n \in \mathbb{Z}$.  First note that by Lemma~\ref{l:K} there are, up to
  leading zeros, only finitely many words $z \in G_U$ with $z \sim_U n$.
  Therefore there exists a greedy $U$-expansion of minimal weight $z = z_k
  \cdots z_0$ with $z \sim_U n$.

  Let $y = y_\ell \cdots y_0$ be another word in $G_U$ with $y \sim_U n$.  Then
  we have $0^{\min(\ell-k,0)} |z_k| \cdots |z_0| = 0^{\min(k-\ell,0)}
  |y_\ell| \cdots |y_0|$.  Neglecting leading zeros, we can assume w.l.o.g.\
  that $k = \ell$.  We have $\frac{y_k+z_k}2 \cdots \frac{y_0+z_0}2 \in
  \mathbb{Z}^*$ because $\frac{y_j+z_j}2 = z_j$ in case $y_j = z_j$,
  $\frac{y_j+z_j}2 = 0$ in case $y_j = -z_j$; and $\frac{y_k+z_k}2 \cdots
  \frac{y_0+z_0}2 \sim_U n$.  If we had $y_j \neq z_j$ for some~$j$, then
  $\frac{y_k+z_k}2 \cdots \frac{y_0+z_0}2$ would have smaller weight than~$z$,
  contradicting $z \in L_U$.  Therefore, $z$ and $y$ differ only by leading
  zeros.
\end{proof}

\begin{theorem}\label{thm:greedyU}
Let $U$ be as in Theorem~\ref{thm:ratU}.
Then $G_U$ is recognised by a finite automaton.
\end{theorem}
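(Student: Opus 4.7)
The plan is to show that $L_U \setminus G_U$ is regular; since $L_U$ is regular by Theorem~\ref{thm:ratU}, this will present $G_U$ as the difference of two regular languages, hence regular.

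A word $z = z_k \cdots z_0 \in L_U$ fails to be in $G_U$ precisely when there is some $y = y_\ell \cdots y_0 \in L_U$ with $z \sim_U y$ such that, after padding the shorter of $|z_k| \cdots |z_0|$ and $|y_\ell| \cdots |y_0|$ with leading zeros to equalise lengths, the latter is strictly lexicographically larger than the former. By Lemma~\ref{l:K}, the lengths of any two such candidate words (without leading zeros) differ by at most a fixed constant $m$, so only a bounded amount of padding is ever needed. My plan is to construct a finite letter-to-letter transducer $\mathcal{T}''$ accepting exactly these padded pairs $(z,y)$. Its state carries four components: (a) the current state of $\mathcal{M}_{U,\Sigma}$ reached by the input, certifying that the input extends to a word in~$L_U$; (b) the analogous state tracking the output; (c) a state of the transducer $\mathcal{T}'$ from the proof of Theorem~\ref{thm:ratU} (equivalently of the automaton from Proposition~\ref{p:ratZ}) that tracks $z \sim_U y$; and (d) a three-valued flag recording whether the absolute-value words are still equal so far, or the output has already become strictly larger, or strictly smaller. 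The flag is updated at each transition $(a,b)$ by comparing $|b|$ with $|a|$ whenever it still reads ``equal''. Initial transitions of shape $(0,b)$ or $(a,0)$ are allowed for up to $m$ steps in order to absorb the padding, adding only finitely many further initial states. A state is terminal when components (a) and (b) are terminal in $\mathcal{M}_{U,\Sigma}$, (c) certifies $z \sim_U y$, and the flag reads ``output larger''.

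Projecting $\mathcal{T}''$ onto its input coordinate yields a nondeterministic finite automaton recognising $L_U \setminus G_U$, which is therefore regular, and hence so is $G_U$. The only delicate point is the finiteness of component~(c) when both the input and the output are constrained to lie in $L_U$, but this is exactly the content of the construction in the proof of Theorem~\ref{thm:ratU}: under this constraint the state parameter $s$ lies in a finite subset of~$\mathbb{Z}[\beta]$ and the weight difference~$\delta$ stays bounded. The remainder is routine bookkeeping in the spirit of the transducer arguments already used in the paper.
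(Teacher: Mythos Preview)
Your proposal is correct and follows essentially the same strategy as the paper's proof: exhibit $L_U \setminus G_U$ as the input projection of a finite transducer whose states track membership of both words in $L_U$, the relation $z \sim_U y$, and the lexicographic comparison of the absolute-value words. The paper's version is marginally more streamlined, invoking Proposition~\ref{p:ratZ} directly for the $\sim_U$ component (so no weight-difference parameter $\delta$ appears and finiteness is immediate) and handling the length mismatch simply by closing the projected language under prepending and removing leading zeros rather than via the bound from Lemma~\ref{l:K}.
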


\begin{proof}
  It suffices to show that $L_U \setminus G_U$ is a regular language.  Since
  the complement of a regular language is regular, see e.g.\
  \cite[Lemma~3.9]{Frougny_Steiner2008:minimal_weight_expansions}, this implies
  that $G_U$ is regular.

  Let $z = z_k \cdots z_0 \in L_U \setminus G_U$.  Then there exists a $y =
  y_\ell \cdots y_0 \in L_U$ with $y \sim_U z$ and $0^{\min(\ell-k,0)}\, |z_k|
  \cdots |z_0| < 0^{\min(k-\ell,0)}\, |y_\ell| \cdots |y_0|$.  Since $L_U$ is a
  regular language, the product $\{(z_k \cdots z_0, y_k \cdots y_0) \mid z_k
  \cdots z_0, y_k \cdots y_0 \in L_U,\, k \ge 0\}$ is recognised by a finite
  automaton.  One can construct a finite automaton recognising the
  lexicographic relation $|z_k| \cdots |z_0| < |y_k| \cdots |y_0|$.  By
  Proposition~\ref{p:ratZ}, the same holds for $z_k \cdots z_0 \sim_U y_k
  \cdots y_0$.  Therefore, there exists a finite automaton which recognises the
  set of pairs $(z_k \cdots z_0, y_k \cdots y_0) \in L_U \times L_U$ such that
  $z_k \cdots z_0 \sim_U y_k \cdots y_0$ and $|z_k| \cdots |z_0| < |y_k| \cdots
  |y_0|$.  Let $H$ be the projection of this set to the first coordinate, then
  $H$ is regular.  Moreover, $L_U \setminus G_U$ is the set of words which are
  obtained from words in $H$ by removing or appending initial zeros.  This
  language is regular as well.  By the remarks at the beginning of the proof,
  this proves the lemma.
\end{proof}

\begin{prop} \label{p:conv} Let $U$ be as in Theorem~\ref{thm:ratU}.  Then
  there exists a finite letter-to-letter transducer recognising the pairs $(z_k
  \cdots z_0, y_k \cdots y_0) \in L_U \times G_U$ with $z_k \cdots z_0 \sim_U
  y_k \cdots y_0$.
\end{prop}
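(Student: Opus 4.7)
My plan is to realise the desired letter-to-letter transducer as an ordinary finite automaton over the product alphabet $\Sigma \times \Sigma$, where $\Sigma$ is a finite alphabet containing $0$ such that $G_U \subseteq L_U \subseteq \Sigma^*$, which exists by Proposition~\ref{p:LUB}. With this reformulation, what must be shown is that the set of those pairs $(z_k \cdots z_0, y_k \cdots y_0) \in (\Sigma \times \Sigma)^*$ with $z \in L_U$, $y \in G_U$, and $z \sim_U y$ is regular over the product alphabet.

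I would decompose this language as the intersection of three regular languages over $\Sigma \times \Sigma$. First, the pairs whose first coordinate lies in $L_U$ form a regular language by Theorem~\ref{thm:ratU}: apply the recognising automaton to the first projection of each label. Second, the pairs whose second coordinate lies in $G_U$ are regular by Theorem~\ref{thm:greedyU} in the same way. Third, the pairs $(z, y)$ of equal length with $z \sim_U y$ form a regular language, which is the only substantive point and which I would derive from Proposition~\ref{p:ratZ} as follows. The condition $z \sim_U y$ is equivalent to the digitwise difference $z - y$, viewed as a word over the finite alphabet $\Sigma' := \Sigma - \Sigma$, lying in $Z_{U, \Sigma'}$. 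Hence, starting from the finite automaton $\mathcal{A}_{U,\Sigma'}$ of Proposition~\ref{p:ratZ}, I would lift it to a finite automaton over $\Sigma \times \Sigma$ by replacing each transition $q \stackrel{c}{\to} q'$ with parallel transitions $q \stackrel{(a,b)}{\to} q'$ for every $(a, b) \in \Sigma \times \Sigma$ with $a - b = c$.

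Taking the product of the three finite automata then produces the required letter-to-letter transducer. I do not anticipate a real obstacle: the heavy lifting has already been carried out in Propositions~\ref{p:LUB} and~\ref{p:ratZ} and in Theorems~\ref{thm:ratU} and~\ref{thm:greedyU}, and what remains is a routine product construction. The only mild subtlety is the common-length requirement, but this is handled by the closure of $L_U$ and $G_U$ under prepending zeros (since $0 \in \Sigma$) together with Lemma~\ref{l:K}, which bounds the length discrepancy between any $U$-expansion of minimal weight and any greedy $U$-expansion of the same integer, so every pair in the target relation admits a zero-padded representative of common length accepted by the transducer.
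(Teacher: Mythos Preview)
Your proposal is correct and follows essentially the same approach as the paper: the paper's proof is a one-line reference to the argument of Theorem~\ref{thm:greedyU}, which likewise intersects the product language $L_U \times G_U$ (regular by Theorems~\ref{thm:ratU} and~\ref{thm:greedyU}) with the $\sim_U$-relation obtained from Proposition~\ref{p:ratZ} via the digitwise difference. Your final remark invoking Lemma~\ref{l:K} is not needed for the proposition as stated (the equal-length requirement is already built into the statement), but it does no harm and anticipates exactly the observation the paper makes immediately after the proof.
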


\begin{proof}
  Similarly to the proof of Theorem~\ref{thm:greedyU}, this follows from the
  regularity of $L_U$ and~$G_U$, and from Proposition~\ref{p:ratZ}.
\end{proof}

Note that in Proposition~\ref{p:conv} the input and output must have the same
length.  In particular, the input must be as least as long as the corresponding
word in $G_U$ without leading zeros.  By Lemma~\ref{l:K}, the difference
between any two words in $L_U$ without leading zeros with the same value as
$U$-expansions is bounded.  Therefore, there exists a \emph{transducer with
  initial function} which computes, for every word in $L_U$ without leading
zeros, the corresponding word in $G_U$ without leading zeros, see
\cite{Frougny1992:representation_numbers_finite}.

We have the following corollary of Proposition~\ref{p:conv}, where
$\mathcal{N}_{U,\Sigma}$ denotes the trim minimal letter-to-letter transducer
of Proposition~\ref{p:conv}.

\begin{corollary} \label{c:transducer} Let $U, \Sigma, f$ be as in
  Section~\ref{sec:aver-numb-repr}, $n \in \mathbb{Z}$ and $y \in G_U$ with $y
  \sim_U n$.  Then $f(n)$ is given by the number of successful paths in
  $\mathcal{N}_{U,\Sigma}$ with $y$ as output.
\end{corollary}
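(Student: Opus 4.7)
The plan is to establish a bijection between the two sets being counted, namely $U$-expansions of minimal weight of $n$ without leading zeros on one side, and successful paths of $\mathcal{N}_{U,\Sigma}$ with output $y$ on the other. The whole argument should be essentially a bookkeeping exercise combining Proposition~\ref{p:conv} with the greedy property defining $G_U$.

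First, I would invoke Proposition~\ref{p:conv}: since $\mathcal{N}_{U,\Sigma}$ is the trim minimal letter-to-letter transducer recognising the pairs $(z',y') \in L_U \times G_U$ of equal length with $z' \sim_U y'$, its successful paths are in bijection with such pairs. Fixing the output to be $y$, the paths with output $y$ are therefore in bijection with the set
\begin{equation*}
S_y = \{z' \in L_U \cap \Sigma^{|y|}:\ z' \sim_U n\},
\end{equation*}
where I used $y \sim_U n$.

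Next, I would compare $S_y$ with the set $S = \{z \in L_U:\ z \sim_U n,\ z \text{ has no leading zero}\}$ whose cardinality is $f(n)$. The map $\pi: S \to S_y$ that prepends leading zeros to bring the length up to $|y|$ is clearly well defined once I know $|z| \le |y|$ for every $z \in S$: indeed, $L_U$ is closed under prepending zeros. Conversely, stripping leading zeros from any element of $S_y$ yields a unique element of $S$ (or the empty word, in case $n=0$, which is handled separately and corresponds to a single trivial expansion), so $\pi$ is a bijection.

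The only substantive step, and the one I expect to require actual argument, is the length bound $|z| \le |y|$ for $z \in S$. This is a direct consequence of the definition of $G_U$: if $|z| > |y|$, then padding $y$ on the left to length $|z|$ gives a word starting with $|z|-|y|$ zeros, which is lexicographically strictly smaller than $|z_k|\cdots|z_0|$ because $z_k \ne 0$, contradicting the greedy maximality of $y$ among elements of $L_U$ equivalent under $\sim_U$. Combining the bijection $\pi: S \to S_y$ with the bijection between $S_y$ and successful paths of $\mathcal{N}_{U,\Sigma}$ with output $y$ gives $f(n) = \#S = \#S_y$, which is the assertion of the corollary.
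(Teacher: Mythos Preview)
Your proof is correct and supplies exactly the details the paper leaves implicit; the corollary is stated in the paper without proof, as an immediate consequence of Proposition~\ref{p:conv}. Your argument---identifying successful paths with output $y$ with the set $S_y$ (using that the trim minimal transducer is deterministic over pairs), and then bijecting $S_y$ with the set of minimal-weight expansions without leading zeros via padding, the length bound coming from the lexicographic maximality defining $G_U$---is the natural and intended route.

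One small remark: the bijection between successful paths and accepted pairs relies on $\mathcal{N}_{U,\Sigma}$ being deterministic as an automaton over $\Sigma\times\Sigma$; this is part of the paper's definition of ``trim minimal automaton'', so your first step is justified, but it would not hurt to say so explicitly.
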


It was shown in \cite{Frougny_Steiner2008:minimal_weight_expansions} that the
transducer in Figure~\ref{f:normal} transforms any $F$-expansion of minimal
weight, after possible addition of a leading zero, into the corresponding
$F$-expansion of minimal weight avoiding the factors $11$, $1\bar1$, $101$,
$10\bar1$, $1001$ and their opposites.  Here, we write $\bar{1}$ instead
of~$1$, and opposite means that $1$'s and $\bar{1}$'s are exchanged.  
This transducer shows that the set of outputs is~$G_F$, thus this transducer is equal to $\mathcal{N}_{F,\{-1,0,1\}}$.

\begin{figure}
\includegraphics{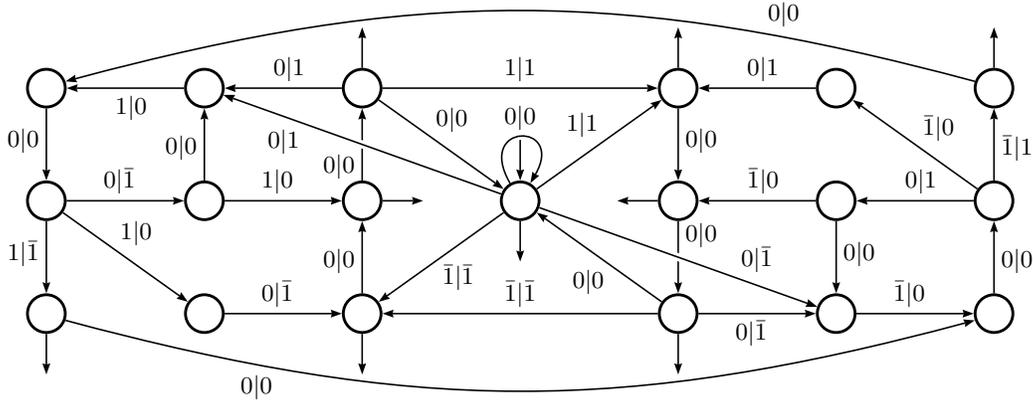}
\caption{Transducer $\mathcal{N}_{F,\{-1,0,1\}}$ normalising $F$-expansions of
  minimal weight in $\{-1,0,1\}^*$. Here $a\,|\,b$ stands for $(a,b)$ in the transition labels.} \label{f:normal}
\end{figure}
 
Now we can relate the growth of $f(n)$ to the \emph{joint spectral radius} of
the set $\{R_{U,a}:\, a \in \Sigma\}$, where $R_{U,a}$ denotes the adjacency
matrix of the transitions with output $a$ in~$\mathcal{N}_{U,\Sigma}$.  The
joint spectral radius is defined by
\begin{equation*}
\rho\big(\{R_{U,a}:\, a \in \Sigma\}\big) = \lim_{k\to\infty} \max\big\{
\|R_{U,z_{k-1}} \cdots R_{U,z_0}\|^{1/k} \mid z_{k-1} \cdots z_0 \in
\Sigma^k\big\},
\end{equation*}
where $\|\cdot\|$ is any matrix norm.  The definition of the joint spectral radius is due to
\cite{Rota_Strang1960}; an overview of its properties and its calculation can
be found in \cite{Jungers2009}.

\begin{theorem} \label{t:jointspectralradius} Let $U, \Sigma, f$ be as in
  Section~\ref{sec:aver-numb-repr}.  For any $\varepsilon > 0$, we have $f(n) =
  \mathcal{O}(|n|^{\log_\beta(\gamma+\varepsilon)})$, where $\gamma$ is the joint
  spectral radius of the set of matrices $\{R_{U,a}:\, a \in \Sigma\}$.
\end{theorem}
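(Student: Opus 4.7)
The plan is to combine Corollary~\ref{c:transducer} with the Rota--Strang norm characterisation of the joint spectral radius and the length estimate from Lemma~\ref{l:K}. Fix $n \in \mathbb{Z}$ and let $y = y_{k-1}\cdots y_0 \in G_U$ be the (up to leading zeros unique) greedy minimal-weight expansion of~$n$, normalised so that $y_{k-1} \ne 0$. By Corollary~\ref{c:transducer}, $f(n)$ equals the number of successful paths in $\mathcal{N}_{U,\Sigma}$ with output~$y$, and this count can be written as the matrix product
\begin{equation*}
f(n) \;=\; \mathbf{u}_1\, R_{U,y_{k-1}} R_{U,y_{k-2}} \cdots R_{U,y_0}\, \mathbf{u}_2,
\end{equation*}
where $\mathbf{u}_1$ and $\mathbf{u}_2$ are the indicator vectors of initial and terminal states of $\mathcal{N}_{U,\Sigma}$.

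Next I invoke the classical norm characterisation of the joint spectral radius (see e.g.\ \cite{Rota_Strang1960,Jungers2009}): for every $\varepsilon > 0$ there exists a vector norm on the state space whose induced operator norm satisfies $\|R_{U,a}\| \le \gamma + \varepsilon$ for all $a \in \Sigma$. With this norm, a telescoping estimate gives $\|R_{U,y_{k-1}}\cdots R_{U,y_0}\| \le (\gamma+\varepsilon)^k$, and since all norms on a finite-dimensional space are equivalent, this yields $f(n) \le C\,(\gamma+\varepsilon)^k$ for a constant $C$ depending only on the chosen norm and on~$\mathcal{N}_{U,\Sigma}$, not on~$n$.

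It remains to relate $k$ to $|n|$. The ordinary greedy $U$-expansion of~$|n|$ (or its digit-wise negation, for negative $n$) has length $\ell$ with $U_\ell \le |n| < U_{\ell+1}$, hence $\ell = \log_\beta|n| + O(1)$ by~\eqref{eq:Gk}. Applying Lemma~\ref{l:K} with this expansion on one side and $y \in L_U$ on the other gives $k \le \ell + m$ for some fixed constant~$m$. Substituting,
\begin{equation*}
f(n) \;=\; \mathcal{O}\big((\gamma+\varepsilon)^{\log_\beta|n| + O(1)}\big) \;=\; \mathcal{O}\big(|n|^{\log_\beta(\gamma+\varepsilon)}\big),
\end{equation*}
where the $O(1)$ contribution is absorbed into the implicit constant (or, if preferred, into a slightly enlarged~$\varepsilon$).

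The step doing the real work is the norm characterisation of the joint spectral radius, which upgrades the asymptotic $\limsup$ definition of $\gamma$ to a uniform letter-by-letter bound along the entire word~$y$; the rest is bookkeeping. The only mild subtlety is to ensure that the matrix product representation of~$f(n)$ from Corollary~\ref{c:transducer} is set up at a length~$k$ compatible with both the convention for $G_U$ and the initial/terminal state structure of $\mathcal{N}_{U,\Sigma}$, which at worst costs a bounded prefix of leading zeros and does not affect the asymptotics.
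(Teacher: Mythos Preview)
Your proof is correct and follows essentially the same approach as the paper: express $f(n)$ via Corollary~\ref{c:transducer} as the matrix product $\mathbf{u}_1 R_{U,y_{k-1}}\cdots R_{U,y_0}\mathbf{u}_2$, bound this by $(\gamma+\varepsilon)^k$ using the joint spectral radius, and relate $k$ to $\log_\beta|n|$ via Lemma~\ref{l:K}. The only difference is that you spell out the Rota--Strang norm characterisation explicitly, whereas the paper invokes the bound $\|R_{U,y_{k-1}}\cdots R_{U,y_0}\| = \mathcal{O}((\gamma+\varepsilon)^k)$ directly from the definition of~$\gamma$.
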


\begin{proof}
  For any non-zero $n \in \mathbb{Z}$, the length of the word $y \in G_U$ with
  $y \sim_U n$ and without leading zeros is $\log_\beta |n| + \mathcal{O}(1)$.
  By Corollary~\ref{c:transducer}, the number of successful paths
  in~$\mathcal{N}_{U,\Sigma}$ with output $y_{k-1} \cdots y_0$ is $\mathbf{v}
  R_{U,y_{k-1}} \cdots R_{U,y_0} \mathbf{w}$ for vectors $\mathbf{v},
  \mathbf{w}$ corresponding to the initial and the terminal states
  of~$\mathcal{N}_{U,\Sigma}$, thus $f(n) =
  \mathcal{O}((\gamma+\varepsilon)^{\log_\beta |n|})$.
\end{proof}

For $U = F$ and $\Sigma = \{-1,0,1\}$, we have an explicit formula for the
maximal number of elements of $L_F \cap \{-1,0,1\}^k$ with the same value.

\begin{theorem} 
Let $U = F$ and $\Sigma = \{-1,0,1\}$.
For every $k \ge 1$, we have
\begin{equation*}
\max_{n\in\mathbb{Z}} f_k(n) = 2^{\lfloor(k-1)/3\rfloor}.
\end{equation*}
\end{theorem}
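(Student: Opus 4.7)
The plan is to establish matching lower and upper bounds on $\max_n f_k(n)$ via an explicit extremal construction and an induction on~$k$.

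For the lower bound I would exhibit, for each $m \geq 0$, the integer $n_m = \sum_{i=0}^{m}(-1)^{m-i} F_{3i}$, whose natural length-$(3m+1)$ minimal weight expansion carries $\pm 1$'s at positions $3m, 3m-3, \ldots, 0$ with alternating signs. The weight-preserving identities
\[
F_a - F_{a-3} = F_{a-1} + F_{a-4} \quad (a \geq 4), \qquad F_3 - F_0 = F_2 + F_0,
\]
together with their sign-reversed versions, provide an independent binary ``flip'' at each of the $m$ adjacent non-zero pairs $(y_{3i}, y_{3i-3})$. An induction on~$m$ verifies that the $2^m$ possible subsets of ``flipped'' pairs produce pairwise distinct elements of $L_F \cap \{-1,0,1\}^{3m+1}$, and that these are the only length-$(3m+1)$ minimal weight expansions of~$n_m$. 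Since $\lfloor (k-1)/3\rfloor = m$ for $k \in \{3m+1, 3m+2, 3m+3\}$, leading-zero padding extends this to $f_k(n_m) \geq 2^{\lfloor (k-1)/3\rfloor}$ in all residue classes modulo~$3$.

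For the upper bound I would induct on~$k$. The base cases $k \leq 3$ are immediate: any two distinct words in $\{-1,0,1\}^{\leq 3}$ with the same $F$-value differ in weight or contain one of the factors $11$, $1\bar{1}$, $10\bar{1}$ or an opposite, none of which lie in~$L_F$; hence $f_k(n) \leq 1 = 2^{\lfloor (k-1)/3\rfloor}$. For the inductive step with $k \geq 4$, the crucial claim is that for every $n$ the set
\[
T_n = \bigl\{(z_{k-1}, z_{k-2}, z_{k-3}) : z \in L_F \cap \{-1,0,1\}^k \text{ and } z \sim_F n\bigr\}
\]
has cardinality at most~$2$. Granting this, each prefix $t \in T_n$ extends to at most $f_{k-3}\bigl(n - \sum_{j=k-3}^{k-1} t_j F_j\bigr) \leq 2^{\lfloor (k-4)/3\rfloor}$ words by the inductive hypothesis, so $f_k(n) \leq 2 \cdot 2^{\lfloor (k-4)/3\rfloor} = 2^{\lfloor (k-1)/3\rfloor}$ using the identity $\lfloor (k-1)/3\rfloor = \lfloor (k-4)/3\rfloor + 1$.

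The main obstacle is justifying $|T_n| \leq 2$. Prefixes of length~$3$ containing one of the $L_F$-forbidden factors $11$, $1\bar{1}$, $10\bar{1}$ (or an opposite) are discarded immediately, leaving nine candidates. For a surviving prefix $t$ to belong to~$T_n$, the residual $n - c(t)$ with $c(t) = \sum_{j=k-3}^{k-1} t_j F_j$ must both lie in the achievable range $[-(F_{k-2}-1),\,F_{k-2}-1]$ for length-$(k-3)$ expansions and admit one of weight exactly $\min\{\|z\| : z \sim_F n\} - \|t\|$. Using $F_{k-1} = F_{k-2} + F_{k-3}$ to control the pairwise differences of candidate residuals and invoking sharp bounds on achievable minimal weights in length $k-3$, a finite case analysis shows that at most two of the nine candidates meet both requirements simultaneously. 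In the extremal configurations the admissible pair takes the form $\{(\varepsilon, 0, 0), (0, \varepsilon, 0)\}$ for some sign $\varepsilon \in \{-1, 1\}$, precisely mirroring the flip structure seen in the lower bound construction.
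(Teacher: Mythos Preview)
Your approach differs substantially from the paper's, which works directly with the normalising transducer $\mathcal{N}_{F,\{-1,0,1\}}$: it shows that $\mathbf{v}R_{F,y}\mathbf{w}$ is maximised over $y\in\Sigma^k$ by the periodic word $(100\bar{1}00)^{k/6}$, via a short list of entrywise inequalities among matrix products such as $R_{F,10^7}\le R_{F,100\bar{1}0000}$ and $R_{F,10^k1}\le \widetilde{R}_{F,100\bar{1}}$. Your combinatorial route is more elementary in spirit, but both halves contain genuine gaps.

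For the lower bound, the ``independent flip'' mechanism does not work as stated: the pairs $(y_{3i},y_{3i-3})$ \emph{overlap} (position $3i-3$ belongs to two consecutive pairs), so the flips interfere. Take $m=2$, so $n_2=F_6-F_3+F_0=17$. Flipping $(6,3)$ alone gives $F_5+F_2+F_0$, and flipping $(3,0)$ alone gives $F_6-F_2-F_0$; but there is no coherent way to apply \emph{both} flips, since after the first one position~$3$ carries a zero and the second identity has nothing to act on. The missing fourth expansion is $F_5+F_3-F_0$, which is not reached by any subset of your flips. The $2^m$ expansions do exist, but they are parameterised differently (for $1\le j\le m$ the $j$-th nonzero may sit at position $3j$ or $3j-1$, with signs then forced), not by subsets of the flips you describe.

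For the upper bound, the claim $|T_n|\le 2$ is the entire content of the inductive step, and you have not proved it. The range constraint alone is far from sufficient: three consecutive values among your nine $c(t)$'s span at most roughly $F_{k-2}+F_{k-5}$, comfortably inside the suffix range of width $2F_{k-2}-2$, so multiple prefixes survive the range test simultaneously. Eliminating a third prefix therefore hinges on the weight constraint, i.e.\ on understanding how the minimal weight of $n-c(t)$ varies as $t$ ranges over the nine candidates, and that is precisely the delicate point you defer to ``a finite case analysis''. Without that analysis (or an equivalent structural lemma), the argument is incomplete.
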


\begin{proof}
First we show that 
\begin{equation} \label{e:maxvRw}
\max_{y\in\Sigma^k} \mathbf{v} R_{F,y} \mathbf{w} = 2^{\lfloor(k-1)/3\rfloor},
\end{equation}
where $R_{F,y_{k-1} \cdots y_0} = R_{F,y_{k-1}} \cdots R_{F,y_0}$, and $\mathbf{v}, \mathbf{w}$ are as in the proof of Theorem~\ref{t:jointspectralradius}.
From the structure of~$G_F$, it is clear that $R_{F,y} = 0$ if $y$ contains a factor $11, 1\bar{1}, 101, 10\bar{1}, 1001$ or its opposite. 
We have the following entrywise relations between matrices:
\begin{gather*}
R_{F,10^7} \le R_{F,100\bar{1}0000},\ R_{F,1000\bar{1}0} \le R_{F,100\bar{1}0},\ R_{F,10^k\bar{1}} \le R_{F,100\bar{1}} \quad \mbox{for all}\ k \ge 4, \\
R_{F,100001} \le \widetilde{R}_{F,1000\bar{1}},\ R_{F,10^k1} \le \widetilde{R}_{F,100\bar{1}} \quad \mbox{for}\ k = 3\ \mbox{and all}\ k \ge 5.
\end{gather*}
Here, $\widetilde{R}_{F,y}$ denotes the matrix which is obtained from $R_{F,y}$ by exchanging each column with its symmetric counterpart.  
Since $R_{F,y} \mathbf{w} = R_{F,y0} \mathbf{w}$ for all $y \in \Sigma^*$, we obtain that $R_{F,y} \mathbf{w}$ is maximal for $y$ with period $100\bar{1}00$.
For $y = (100\bar{1}00)^{k/6}$, where a fractional power $(z_1\cdots z_6)^{k/6}$ denotes as usual the word $(z_1 \cdots z_6)^{\lfloor k/6\rfloor}\, z_1 \cdots z_{k-6\lfloor k/6\rfloor}$, we have $\mathbf{v} R_{F,y} \mathbf{w} = 2^{\lfloor(k-1)/3\rfloor}$, thus~\eqref{e:maxvRw} holds.
By Corollary~\ref{c:transducer}, this means that $f_k(n) = 2^{\lfloor(k-1)/3\rfloor}$ for $n \sim_F (100\bar{1}00)^{k/6}$, and $f_k(n) \le 2^{\lfloor(k-1)/3\rfloor}$ for all $n$ with greedy $F$-expansion of minimal weight of length at most~$k$.

It remains to consider $f_k(n)$ for those $n$ whose greedy $F$-expansion of minimal weight (without leading zeros) is longer than~$k$.
The transducer $\mathcal{N}_{F,\{-1,0,1\}}$ shows that any $F$-expansion of minimal weight in $\{-1,0,1\}^*$ is at most $1$ shorter than the corresponding greedy $F$-expansion of minimal weight.
Thus we have to consider $n \sim_F y_k y_{k-1} \cdots y_0 \in G_F$, where we assume w.l.o.g.\ $y_k = 1$.
For such an~$n$, we have $f_k(n) = \mathbf{v}' R_{F,y_{k-1} \cdots y_0} \mathbf{w}$, where $\mathbf{v}'$ is the indicator vector of the state which is reached from the initial state by the transition labeled by $(1,0)$.
Now, equations and inequalities such as $\mathbf{v}' R_{F,00\bar{1}001} = \mathbf{v} R_{F,00\bar{1}001}$ and $\mathbf{v}' R_{F,000\bar{1}} \le \mathbf{v} R_{F,\bar{1}}$ show that $f_k(n) \le \max_{y\in\Sigma^k} \mathbf{v} R_{F,y} \mathbf{w} = 2^{\lfloor(k-1)/3\rfloor}$.
\end{proof}

\bibliography{redundant}
\bibliographystyle{amsalpha}
\end{document}